\numberwithin{equation}{section}
\renewcommand{\subsection}{\@startsection
{subsection}{2}{0mm}{\baselineskip}{-0.25cm}
{\normalfont\normalsize\bf}}
\newtheorem{theorem}{Theorem}[section]
\newtheorem{proposition}[theorem]{Proposition}
\newtheorem{lemma}[theorem]{Lemma}
\newtheorem{corollary}[theorem]{Corollary}
\theoremstyle{definition}
   \theoremstyle{remark}
\newtheorem{remark}[theorem]{Remark}
\newtheorem{open}[theorem]{Open Problem}
\newcommand{\F}{{\mathbb F}}
\begin{document}

\author[D. Bartoli]{Daniele Bartoli}
\address{
Dipartimento di Matematica e Informatica, Universit\`a degli Studi di Perugia}
	\email{daniele.bartoli@unipg.it}
\author[M. Bonini]{Matteo Bonini}
\address{
Dipartimento di Matematica, Universit\`a di Trento}
	\email{matteo.bonini@unitn.it}
\author[M. Timpanella]{Marco Timpanella}
\address{Dipartimento di Matematica, Informatica ed Economia, Universit\`a degli Studi della Basilicata}
	\email{marco.timpanella@unibas.it}
\title{On the weight distribution of some minimal codes}
\begin{abstract}
Minimal codes are a class of linear codes which gained interest in the last years, thanks to their connections
to secret sharing schemes.
In this paper we provide the weight distribution and the parameters of families of minimal codes recently introduced by C. Tang, Y. Qiu, Q. Liao, Z. Zhou, answering some open questions.
\end{abstract}

\maketitle

{\bf Keyword}:  Linear code, minimal code, weight distribution.\\
{\bf 2010 MSC}: 94B05, 94A62
\section{Introduction}

A codeword $ c$ in a linear code $\mathcal{C}$ is called \emph{minimal} if its support (i.e., the set of nonzero coordinates of $ c $) does not contain the support of any other independent codeword. A minimal code is a linear code whose nonzero codewords are minimal. Minimal codewords and minimal codes in general have interesting connections to linear code-based secret sharing schemes (SSS); see \cite{Massey,Massey2}. 

A secret sharing scheme is a method to distribute shares of a secret to each of the participants $ \mathcal{P} $ in such a way that only the authorized subsets of $ \mathcal{P} $  could reconstruct the secret; see \cite{Shamir1979,Blakley1979}. 
%A set of participants $A$ is said to be a \emph{minimal authorized subset} if $A\in \Gamma$ and no proper subset of $A$ belongs to $\Gamma$. 

In  \cite{Massey,Massey2} Massey considered the use of linear codes for realizing a perfect (i.e. all authorized sets of participants can recover the secret while unauthorized sets of participants cannot determine any shares of the secret) and ideal (i.e. the shares of all participants are of the same size as that of the secret) SSS. It turns out that the access structure of the secret-sharing scheme corresponding to an $[n,k]_q$-code $\mathcal{C}$ is specified by the support of minimal codewords in the dual code $\mathcal{C}^{\bot}$ having $1$ as the first component.

In general, it is quite hard to find the whole set of  minimal codewords  of a given linear code; see \cite{BMeT1978,BN1990}. For this reason, minimal codes have been widely investigated in the last years; see for instance \cite{CCP2014,SL2012}.  Most of the known families of minimal codes are in characteristic two. 

A sufficient criterion for a linear code to be minimal is given by Ashikhmin and Barg in \cite{AB}.
\begin{lemma}\label{Th:Ashikhmin-Barg}
A linear code $\mathcal{C}$ over $\mathbb{F}_q$ is minimal if 
\begin{equation}\label{Eq:Ashikhmin-Barg}
\frac{w_{min}}{w_{max}} > \frac{q-1}{q},
\end{equation}
where $w_{min}$ and $w_{max}$ denote the minimum and maximum nonzero Hamming weights in $\mathcal{C}$,  respectively.
\end{lemma}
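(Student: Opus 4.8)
The plan is to prove the Ashikhmin--Barg criterion by directly invoking the definition of minimality and deriving a contradiction from a support-containment relation. Suppose, for contradiction, that $\mathcal{C}$ is \emph{not} minimal. Then there exist two linearly independent codewords $c, c' \in \mathcal{C}$ such that the support of $c'$ is contained in the support of $c$; my goal is to show that this forces $w_{min}/w_{max} \le (q-1)/q$, contradicting the hypothesis \eqref{Eq:Ashikhmin-Barg}.

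First I would set up the counting. Write $n$ for the length of $\mathcal{C}$, and let $s = |\mathrm{supp}(c)|$ be the weight of $c$. For each nonzero scalar $\lambda \in \mathbb{F}_q^{*}$, consider the codeword $c' - \lambda c$. On the coordinates outside $\mathrm{supp}(c)$ this codeword vanishes (because both $c'$ and $c$ vanish there, using $\mathrm{supp}(c') \subseteq \mathrm{supp}(c)$), so its support is contained in $\mathrm{supp}(c)$ as well. The key combinatorial observation is to count zeros \emph{within} $\mathrm{supp}(c)$: for each position $i \in \mathrm{supp}(c)$, the quantity $c'_i - \lambda c_i$ vanishes for exactly one value of $\lambda \in \mathbb{F}_q^{*}$ when $c'_i \ne 0$ (namely $\lambda = c'_i/c_i$), and the independence of $c, c'$ guarantees each $c_i \ne 0$ on the support. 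Summing over all $q-1$ nonzero scalars $\lambda$, the total number of vanishing coordinates among these codewords is controlled, and an averaging argument produces some $\lambda_0$ for which $c' - \lambda_0 c$ is a \emph{nonzero} codeword of weight at most $\frac{q-1}{q}\, s$.

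Putting the pieces together: this codeword $c' - \lambda_0 c$ is nonzero (by independence of $c$ and $c'$), it is a genuine codeword of $\mathcal{C}$, and its weight is at most $\frac{q-1}{q} s \le \frac{q-1}{q} w_{max}$. Hence
\begin{equation*}
w_{min} \le \frac{q-1}{q}\, w_{max},
\end{equation*}
which is exactly the negation of the strict inequality \eqref{Eq:Ashikhmin-Barg}. This contradiction shows that no such pair $c, c'$ can exist, so every nonzero codeword of $\mathcal{C}$ is minimal, and $\mathcal{C}$ is a minimal code.

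I expect the main obstacle to be the averaging step: one must argue carefully that summing the number of coordinates killed as $\lambda$ ranges over $\mathbb{F}_q^{*}$ yields, by the pigeonhole principle, a single choice $\lambda_0$ whose codeword has few nonzero coordinates, while simultaneously ensuring that this chosen codeword is not identically zero. The nonvanishing is where linear independence of $c$ and $c'$ is essential, and the bound $\frac{q-1}{q}$ emerges precisely because at most a $\tfrac{1}{q}$ fraction of the support can be annihilated on average. Everything else is routine bookkeeping with supports and Hamming weights.
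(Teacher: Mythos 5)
First, note that the paper itself offers no proof of this lemma: it is quoted verbatim from Ashikhmin and Barg \cite{AB}, so there is no internal argument to compare against. Your overall strategy is the standard one for this criterion, and the architecture is right: assume non-minimality, take independent codewords $c,c'$ with $\mathrm{supp}(c')\subseteq\mathrm{supp}(c)$, perturb by scalar multiples of $c$, and use linear independence to guarantee the perturbed codeword is nonzero.

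However, the quantitative step at the heart of your argument does not work as written. If you restrict $\lambda$ to $\mathbb{F}_q^{*}$, then a coordinate $i\in\mathrm{supp}(c)$ with $c'_i=0$ is annihilated only by $\lambda=c'_i/c_i=0$, which you have excluded; so the total number of annihilated coordinates over all $q-1$ choices of $\lambda$ is exactly $w(c')$, not $s$, and the average weight of $c'-\lambda c$ over $\lambda\in\mathbb{F}_q^{*}$ is $s-\frac{w(c')}{q-1}$. This is at most $\frac{q-1}{q}s$ only when $w(c')\ge\frac{q-1}{q}s$; if $w(c')$ is small (say $w(c')=1$ and $s>q$), then no $\lambda_0\in\mathbb{F}_q^{*}$ satisfies the bound you claim, so the pigeonhole step fails. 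The repair is a one-liner: average over all $q$ codewords $c'-\lambda c$ with $\lambda\in\mathbb{F}_q$ (each is nonzero by independence, each has support inside $\mathrm{supp}(c)$, each coordinate of $\mathrm{supp}(c)$ is killed by exactly one $\lambda$, so the total weight is $(q-1)s$ and some $\lambda_0$ gives weight at most $\frac{q-1}{q}s\le\frac{q-1}{q}w_{max}$); alternatively, split off the case $w(c')\le\frac{q-1}{q}s$, where $c'$ itself already contradicts \eqref{Eq:Ashikhmin-Barg}. Also, a small slip: $c_i\ne 0$ for $i\in\mathrm{supp}(c)$ is just the definition of support, not a consequence of independence; independence is needed only to ensure $c'-\lambda_0 c\ne 0$. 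With these corrections the proof is complete.
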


Families of minimal linear codes satisfying Condition \eqref{Eq:Ashikhmin-Barg} have been considered in several papers; e.g. see \cite{CDY2005,Ding2015,DLLZ2016,YD2006}. However, Condition~\eqref{Eq:Ashikhmin-Barg} is not necessary and examples of minimal codes not satisfying Condition~\eqref{Eq:Ashikhmin-Barg} have been constructed in i.e. \cite{CMP2013,CH2017,HDZ,DHZ,BarBon2019,BonBor2019,TQLZ,BBG,ABN19}.  

In this paper we provide the weight distribution and the parameters of families of minimal codes recently introduced in \cite{TQLZ}, answering to some  open questions.

The constructions of minimal codes presented in \cite{TQLZ} can be described in a geometrical way. Consider the affine space $\textrm{AG}(k,q)\simeq \mathbb{F}_q^k$ of dimension $k$ over the finite field $\mathbb{F}_q$, $q$ a prime power. 

Let $D=\{P_1,\ldots,P_n\}$ be a multiset of points in $\textrm{AG}(k,q)$ corresponding to the columns of a generator matrix of an $[n,k]_q$ linear code $\mathrm{C}_D$. For a hyperplane $H: \alpha_1 x_1+\cdots+ \alpha_k x_k=0$ through the origin  of $\textrm{AG}(k,q)$ and a point $P=(\overline{x}_1,\ldots,\overline{x}_k)\in \textrm{AG}(k,q)$, $H(P)$ denotes $\alpha_1 \overline{x}_1+\cdots+ \alpha_k \overline{x}_k$. 

With this notation, 
$$\mathrm{C}_D := \{(H(P_1),\ldots,H(P_n)) : H \text{ is an hyperplane of } AG(k,q) \text{ through the origin}\}.$$

The authors of  \cite{TQLZ}, following \cite{DLN,DN}, call $D$ the defining set of $\mathrm{C}_D$. They also present an interesting machinery which provides new minimal codes from old ones; see \cite[Theorem 43]{TQLZ}, where they make use of the concept of vectorial  cutting blocking set \cite{BonBor2019}.

\begin{theorem}
\label{codicitilde} Let $k \ge 2$. Let $D_1$ and $D_2$ be two vectorial cutting blocking sets in $AG(k,q)$ such that $D_1=a \cdot  D_1$ for any $a\in \F_q^*$.
Consider the following subset of $AG(k+1,q)$ 
\[
\widetilde{[D_1, D_2]}:=\left \{ (\mathbf x,1)\in AG(k+1,q): \mathbf x \in D_1 \right \} \bigcup \left \{ ( \mathbf x,0)\in AG(k+1,q): \mathbf x \in D_2 \right \}.
\]
Then, $\widetilde{[D_1, D_2]}$ is a vectorial cutting blocking set in $AG(k+1,q)$.
In particular, $\mathrm{C}_{\widetilde{[D_1, D_2]}}$ is a minimal code of length $(\# D_1+ \#D_2)$ and dimension $(k+1)$.
\end{theorem}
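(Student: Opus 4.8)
The plan is to verify directly that $\widetilde{[D_1,D_2]}$ satisfies the defining property of a vectorial cutting blocking set in $AG(k+1,q)$ — namely that, for every hyperplane $\Pi$ through the origin, the points of $\widetilde{[D_1,D_2]}$ lying on $\Pi$ span $\Pi$ as a vector space. The final (\emph{in particular}) assertion then follows from the known implication that a vectorial cutting blocking set produces a minimal code; the length is clearly $\#D_1+\#D_2$ (the two parts are disjoint, as they differ in the last coordinate), and the dimension equals $\dim\langle\widetilde{[D_1,D_2]}\rangle$, which the argument below shows to be $k+1$.

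First I would record a lemma used throughout: any vectorial cutting blocking set $D$ in $AG(k,q)$ spans $\mathbb{F}_q^k$. Indeed, if $\langle D\rangle=W$ were proper one could pick a hyperplane $H$ with $\langle D\cap H\rangle\subseteq W\cap H\subsetneq H$, contradicting the spanning condition. In particular both $D_1$ and $D_2$ span $\mathbb{F}_q^k$. Next I would parametrize an arbitrary hyperplane $\Pi$ of $AG(k+1,q)$ through the origin as $\mathbf a\cdot\mathbf x+c\,x_{k+1}=0$, with $\mathbf a\in\mathbb{F}_q^k$, $c\in\mathbb{F}_q$ and $(\mathbf a,c)\ne\mathbf 0$, and identify the points on $\Pi$: a point $(\mathbf x,1)$ with $\mathbf x\in D_1$ lies on $\Pi$ iff $\mathbf a\cdot\mathbf x=-c$, while $(\mathbf x,0)$ with $\mathbf x\in D_2$ lies on $\Pi$ iff $\mathbf a\cdot\mathbf x=0$.

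I would then split into cases. If $\mathbf a=\mathbf 0$ then $\Pi=\{x_{k+1}=0\}$, and the points $\{(\mathbf x,0):\mathbf x\in D_2\}$ span $\Pi$ since $D_2$ spans $\mathbb{F}_q^k$. If $\mathbf a\ne\mathbf 0$, set $H_0=\{\mathbf a\cdot\mathbf x=0\}$; because $D_2$ is a vectorial cutting blocking set, $\langle D_2\cap H_0\rangle=H_0$, so the points $\{(\mathbf x,0):\mathbf x\in D_2\cap H_0\}$ span $H_0\times\{0\}$, a subspace of $\Pi$ of dimension $k-1$. It then suffices to exhibit one point of $\widetilde{[D_1,D_2]}$ on $\Pi$ with last coordinate $1$, which is independent from $H_0\times\{0\}$ and raises the dimension to $k=\dim\Pi$. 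When $c=0$ such a point is $(\mathbf x_0,1)$ for any $\mathbf x_0\in D_1\cap H_0$, and this set is nonempty since $\langle D_1\cap H_0\rangle=H_0\ne\{\mathbf 0\}$ (here $k\ge 2$).

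The crux is the last case $\mathbf a\ne\mathbf 0$, $c\ne 0$, where one must show that $D_1$ meets the affine hyperplane $H_c=\{\mathbf a\cdot\mathbf x=-c\}$, which does not pass through the origin; this is precisely where the hypothesis $D_1=a\cdot D_1$ for all $a\in\mathbb{F}_q^*$ is needed, and I expect it to be the main obstacle. Since $D_1$ spans $\mathbb{F}_q^k$ it is not contained in $H_0$, so there is $\mathbf v\in D_1$ with $d:=\mathbf a\cdot\mathbf v\ne 0$; by scalar invariance $a\mathbf v\in D_1$ for every $a\in\mathbb{F}_q^*$, and the choice $a=-c/d$ yields a point of $D_1$ on $H_c$, hence a point $(a\mathbf v,1)\in\Pi$ with last coordinate $1$. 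Adjoining it to the span of $H_0\times\{0\}$ gives all of $\Pi$, completing the verification that $\widetilde{[D_1,D_2]}$ is a vectorial cutting blocking set. Minimality of $\mathrm C_{\widetilde{[D_1,D_2]}}$, with the stated length and dimension, then follows.
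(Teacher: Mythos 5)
Your argument is correct, but note that the paper itself offers no proof of this statement: it is quoted verbatim from \cite[Theorem 43]{TQLZ}, so there is no in-paper argument to compare yours against. Your verification is sound as a self-contained proof. The case split on the hyperplane $\mathbf a\cdot\mathbf x+c\,x_{k+1}=0$ is the natural one, and you correctly isolate the only nontrivial point, namely that for $\mathbf a\ne\mathbf 0$, $c\ne 0$ the set $D_1$ must meet the affine hyperplane $\mathbf a\cdot\mathbf x=-c$, which is exactly where the hypothesis $D_1=a\cdot D_1$ for all $a\in\F_q^*$ enters (rescaling a $\mathbf v\in D_1$ with $\mathbf a\cdot\mathbf v\ne 0$). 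The preliminary lemma that a vectorial cutting blocking set spans $\F_q^k$ is also needed and correctly justified, and you use $k\ge 2$ precisely where it is required (to guarantee $D_1\cap H_0\ne\emptyset$ when $c=0$). The final reduction of minimality, length and dimension to the cutting-blocking-set characterization of minimal codes is the standard one from \cite{TQLZ,ABN19}. No gaps.
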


In \cite{TQLZ} the authors construct several families of minimal codes not satisfying Condition~\eqref{Eq:Ashikhmin-Barg}. They leave the determination of the weight distribution of some of them as open problems. In general, the computation of the weight distribution or of the weight spectrum (i.e. the set of its nonzero weights) of codes could be a challenging task. On the other hand, this computation provides important information, since for instance the weight distribution of a code allows the computation of the probability of error detection and correction with respect to some error detection and error correction algorithms; see \cite{Klove} for more details. 

Therefore our aim is to provide the weight spectrum or the weight distribution of specific minimal codes constructed in \cite{TQLZ}. In particular, we consider the following families of defining sets.
\begin{enumerate}
\item \label{Family:1} {\bf Family 1.} 
$$D_1 = \left\{(x_1, \ldots , x_k) \in AG(k,q)\setminus \{\overline{0}\} : \left(\sum_{i=1}^h x_i\right) \prod_{i=1}^h x_i=0 \right\},$$
where $4 \leq h \leq k$; see \cite[Open Problem 37]{TQLZ}.
\item \label{Family:2} {\bf Family 2.} 
$$D_2 = \left\{(x_1, \ldots , x_k) \in AG(k,q)\setminus \{\overline{0}\} :  \prod_{1\leq i<j\leq h}^h (x_i+x_j)=0 \right\},$$
where $3 \leq h \leq k$; see \cite[Open Problem 38]{TQLZ}.
\item \label{Family:3} {\bf Family 3.} 
$$D_3 = \left\{(x_1, \ldots , x_k) \in AG(k,q)\setminus \{\overline{0}\} : \prod_{i=1}^h x_i \prod_{1\leq i<j\leq h}^h (x_i+x_j)=0 \right\},$$
where $3 \leq h \leq k$; see \cite[Open Problem 39]{TQLZ}.
%\item \label{Family:4} {\bf Family 4.} 
%$$D_4 = \left\{(x_1, \ldots , x_k) \in \mathbb{F}_q^k\setminus \{0\} : \left(\sum _{i=1}^h x_i\right)\prod_{i=1}^h x_i \prod_{1\leq i<j\leq h}^h (x_i+x_j)=0 \right\},$$
%where $3 \leq h \leq k$; see \cite[Open Problem 40]{TQLZ}.
\item \label{Family:4} {\bf Family 4.} 
$$D_4 = \left\{(x_1, \ldots , x_k) \in AG(k,q)\setminus \{\overline{0}\} : \prod_{i=1}^h x_i =0 \right\},$$
where $3 \leq h \leq k$; see \cite[Open Problem 48]{TQLZ}.
\end{enumerate}

We determine the weight distribution of $\mathrm{C}_{D_1}$, $\mathrm{C}_{\widetilde{[D_1,D_1]}}$, and $\mathrm{C}_{\widetilde{[D_4,D_4]}}$, and the parameters of the codes  $\mathrm{C}_{D_2}$ and $\mathrm{C}_{D_3}$.

\section{Family 1}

By \cite[Theorem 23]{TQLZ} it is readily seen that the dimension of $\mathrm{C}_{D_1}$ is $k$. By \cite[Lemma 32]{TQLZ} and \cite[Theorem 33]{TQLZ}, $\mathrm{C}_D$ is a minimal code of length
\begin{equation*}
    n=q^{k-h-1}(q^{h+1}-(q-1)^{h+1}+(-1)^h(q-1))-1.
\end{equation*}

In order to compute the weight distribution of $\mathrm{C}_D$ it is useful to consider the following integers
\begin{eqnarray*}
\psi_{s}&:=&\#\left\{(x_1, \ldots , x_s) \in AG(s,q): \sum_{i=1}^s x_i =0 \text{ and } x_i \ne 0 \text{ for any } i=1,\ldots ,s\right\},\\
\varphi_{s}&:=&\#\left\{(x_1, \ldots , x_s) \in AG(s,q) : \sum_{i=1}^s x_i =1 \text{ and } x_i \ne 0 \text{ for any } i=1,\ldots ,s\right\}.
\end{eqnarray*}

As generalization of \cite[Lemma 31]{TQLZ}, we have 
\begin{equation*}
    \psi_{s}=\frac{(q-1)^s+(-1)^s(q-1)}{q}, \qquad     \varphi_{s}=\frac{(q-1)^s-\psi_{s}}{q-1}.
\end{equation*}

In particular note that $\psi_0=1$ and $\varphi_0=0$.

Consider now $a_1,\ldots,a_s\in \mathbb{F}_{q}^*$. It is readily seen that 
\begin{eqnarray*}
\psi_{s}&=&\#\left\{(x_1, \ldots , x_s) \in AG(s,q): \sum_{i=1}^s a_i x_i =0 \text{ and } x_i \ne 0 \text{ for any } i=1,\ldots ,s\right\},\\
\varphi_{s}&=&\#\left\{(x_1, \ldots , x_s) \in AG(s,q) : \sum_{i=1}^s a_i x_i =1 \text{ and } x_i \ne 0 \text{ for any } i=1,\ldots ,s\right\}.
\end{eqnarray*}

Let $\pi$ be the hyperplane of $AG(k,q)$ through the origin with affine equation 
\begin{equation}\label{iperpiano}
     a_{i_1}x_{i_1}+\ldots+ a_{i_s}x_{i_s}+b_{j_1}x_{j_1}+\ldots +b_{j_r}x_{j_r}=0,
\end{equation}
where $s\geq 0$, $r\geq 0$, $a_{i_1},\ldots,a_{i_s},b_{j_1},\ldots,b_{j_r}\in \mathbb{F}_q^*$, $i_1,\ldots,i_s \in \{1,\ldots,h\}$ and $j_1,\ldots,j_r \in \{h+1,\ldots,k\}$.

For the weight distribution of $\mathrm{C}_{D_1}$, we need to investigate the number of solutions $\Lambda$ of the system 
\begin{equation}\label{u'sistemone}
    \begin{cases} 
    
     a_{i_1}x_{i_1}+\ldots+ a_{i_s}x_{i_s}+b_{j_1}x_{j_1}+\ldots +b_{j_r}x_{j_r}=0 \\
     (x_1+\ldots+x_h) x_1\cdot \ldots \cdot x_h=0
    
    \end{cases}.
\end{equation}
Indeed, the weight of the codeword induced by $\pi$ is $n-\Lambda+1$.

\begin{proposition}\label{Prop:r>=1}
Let $r\geq 1$. Then 
$$\Lambda=q^{k-1}-q^{k-h-1}(q-1)^h+\psi_{h}q^{k-h-1}=q^{k-h-1}(q^h+\psi_{h}-(q-1)^h).$$
\end{proposition}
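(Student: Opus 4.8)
The plan is to decouple the two blocks of coordinates: those indexed by $\{1,\ldots,h\}$, which are the only ones appearing in the second equation of \eqref{u'sistemone}, and those indexed by $\{h+1,\ldots,k\}$. First I would fix an arbitrary value of $(x_1,\ldots,x_h)\in AG(h,q)$ and count, for this fixed choice, the number of completions $(x_{h+1},\ldots,x_k)$ satisfying the hyperplane equation \eqref{iperpiano}.

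The key observation is that the hypothesis $r\ge 1$ guarantees that \eqref{iperpiano} genuinely involves at least one variable from $\{h+1,\ldots,k\}$. Indeed, once $x_1,\ldots,x_h$ are fixed, the quantity $a_{i_1}x_{i_1}+\ldots+a_{i_s}x_{i_s}$ becomes a constant $c$, and \eqref{iperpiano} reduces to
$$b_{j_1}x_{j_1}+\ldots+b_{j_r}x_{j_r}=-c$$
in the $(k-h)$-dimensional space of the remaining coordinates. Since the coefficient vector is nonzero (as $r\ge 1$ and $b_{j_1},\ldots,b_{j_r}\in\mathbb{F}_q^*$), this is an affine hyperplane of $AG(k-h,q)$, hence it has exactly $q^{k-h-1}$ solutions, independently of the value of $c$. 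Therefore $\Lambda=q^{k-h-1}\cdot N_h$, where $N_h$ denotes the number of $(x_1,\ldots,x_h)\in AG(h,q)$ satisfying $(x_1+\ldots+x_h)x_1\cdots x_h=0$.

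It then remains to compute $N_h$, which I would do by complementary counting. The tuples not counted by $N_h$ are exactly those with $x_i\ne 0$ for every $i$ and $x_1+\ldots+x_h\ne 0$. There are $(q-1)^h$ tuples with all coordinates nonzero, and among these precisely $\psi_h$ satisfy $x_1+\ldots+x_h=0$, by the very definition of $\psi_h$; hence the complement has size $(q-1)^h-\psi_h$, giving $N_h=q^h-(q-1)^h+\psi_h$. Substituting yields
$$\Lambda=q^{k-h-1}\bigl(q^h+\psi_h-(q-1)^h\bigr)=q^{k-1}-q^{k-h-1}(q-1)^h+\psi_h q^{k-h-1},$$
as claimed.

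The computation is essentially routine once the decoupling is set up; the only point requiring care, and the place where the hypothesis $r\ge 1$ is indispensable, is the claim that the hyperplane equation contributes the uniform factor $q^{k-h-1}$ regardless of the fixed first-block values. Were $r=0$, the equation \eqref{iperpiano} would constrain only the first $h$ coordinates, the two conditions of \eqref{u'sistemone} would interact, and the clean product structure $\Lambda=q^{k-h-1}N_h$ would break down; this is precisely why the case $r=0$ must be handled separately.
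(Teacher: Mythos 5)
Your proposal is correct and follows essentially the same route as the paper: both arguments ultimately rest on the fact that $r\geq 1$ makes the hyperplane equation contribute a uniform factor of $q^{k-h-1}$ over the last $k-h$ coordinates, together with the count $\psi_h$ of all-nonzero $h$-tuples with zero sum. The only difference is organizational — you decouple the blocks first and count the first block by complementation, whereas the paper splits the locus $(x_1+\cdots+x_h)x_1\cdots x_h=0$ into the two disjoint pieces $\{\prod x_i=0\}$ and $\{\sum x_i=0,\ \prod x_i\neq 0\}$ and counts each separately — and both yield the identical formula.
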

\begin{proof}
An easy computation shows that the number of solutions of the system 
\begin{equation*}
    \begin{cases} 
    
     a_{i_1}x_{i_1}+\ldots+ a_{i_s}x_{i_s}+b_{j_1}x_{j_1}+\ldots +b_{j_r}x_{j_r}=0 \\
     x_1\cdot \ldots \cdot x_h=0
    
\end{cases}
\end{equation*}
is $q^{k-1}-q^{k-h-1}(q-1)^h$.
Therefore it remains to compute the number of solutions of 
\begin{equation}\label{u'sistemone2}
\begin{cases} 
    
     a_{i_1}x_{i_1}+\ldots+ a_{i_s}x_{i_s}+b_{j_1}x_{j_1}+\ldots +b_{j_r}x_{j_r}=0 \\
     x_1+ \ldots + x_h=0\\
     x_1\cdot \ldots \cdot x_h\ne 0.
    
\end{cases}
\end{equation}
    Rearranging, the above system \eqref{u'sistemone2} is equivalent to 
    \begin{equation}\label{u'sistemone3}
\begin{cases} 
    
     x_{j_1}=-\alpha_{i_1}x_{i_1}-\ldots-\alpha_{i_s}x_{i_s}-\beta_{j_2}x_{j_2}-\ldots-\beta_{j_r}x_{j_r} \\
     x_1+ \ldots + x_h=0\\
     x_1\cdot \ldots \cdot x_h\ne 0,
    
\end{cases}
\end{equation}
with $\alpha_{i_l}=a_{i_l}/b_{j_1}$ and $\beta_{j_l}=b_{j_l}/b_{j_1}$. Since the number of solutions of \eqref{u'sistemone3} is $\psi_{h}q^{k-h-1}$, we obtain $\Lambda=q^{k-1}-q^{k-h-1}(q-1)^h+\psi_{h}q^{k-h-1}=q^{k-h-1}(q^h+\psi_{h}-(q-1)^h)$.
\end{proof}

\begin{proposition}\label{Prop:As}
Let $l\geq 1$, $r_1,\ldots, r_l\geq 1$, and consider $l$ pairwise distinct nonzero elements $\alpha_1,\ldots,\alpha_l$ of $\mathbb{F}_q$. The  number  of solutions  of the system 
\begin{equation}\label{SistemA2}
S_{r_1,\ldots,r_l}(\gamma) : \begin{cases}
    x^{(1)}_{1}+ \ldots + x^{(1)}_{r_1}+x^{(2)}_{1}+ \ldots + x^{(2)}_{r_2}+\cdots+x^{(l)}_{1}+ \ldots + x^{(l)}_{r_l}=\gamma\\
     \alpha_1(x^{(1)}_{1}+ \ldots + x^{(1)}_{r_1})+\alpha_2(x^{(2)}_{1}+ \ldots + x^{(2)}_{r_2})+\cdots+\alpha_l(x^{(l)}_{1}+ \ldots + x^{(l)}_{r_l})=0 \\
    \prod_{i,j}x^{(i)}_{j}\ne 0,
\end{cases}
\end{equation}
is, for $l=1$, $A_{r_1}=\psi_{r_1}$ if $\gamma=0$ and $A_{r_1}=0$ otherwise, and for $l>1$
\begin{equation}\label{NumberOfSolutions}
\left\{\begin{array}{ll}
   A_{r_1,\ldots,r_l}=\psi_{r_1+\cdots +r_{l-1}}\varphi_{r_l}+(-1)^{r_l}A_{r_1,\ldots,r_{l-1}},& \textrm{ if } \gamma=0;\\
  (\psi_{r_1+\cdots+r_{l}}-A_{r_1,\ldots,r_l})/(q-1),& \textrm{ if } \gamma\neq 0.\\
\end{array}
\right.
\end{equation}
\end{proposition}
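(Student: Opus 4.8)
The plan is to pass from the individual variables to the $l$ block-sums $\sigma_i := x^{(i)}_{1}+\cdots+x^{(i)}_{r_i}$ and to count first the admissible sum-vectors $(\sigma_1,\ldots,\sigma_l)$ and then the fillings. Once the values $\sigma_1,\ldots,\sigma_l$ are fixed the two equations and the nonvanishing conditions decouple over the blocks, and the number of ways to realize block $i$ with nonzero entries of sum $\sigma_i$ equals $\psi_{r_i}$ if $\sigma_i=0$ and $\varphi_{r_i}$ if $\sigma_i\neq 0$; here the value for $\sigma_i\neq 0$ is independent of the particular nonzero target by the coefficient-independence of $\psi_s,\varphi_s$ recorded before the statement. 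For $l=1$ the second equation forces $\alpha_1\sigma_1=0$, hence $\sigma_1=\gamma=0$, and the single block then contributes $\psi_{r_1}$; this settles the base case.

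For $\gamma\neq 0$ I would argue by a global scaling. Multiplying every variable by a fixed $\lambda\in\mathbb{F}_q^{*}$ preserves the homogeneous second equation and the nonvanishing, while scaling the left-hand side of the first equation by $\lambda$; this gives a bijection between the solution sets for total sum $\gamma$ and for total sum $\lambda\gamma$, so the count is the same number $A^{\neq}$ for every nonzero $\gamma$. Summing the counts over all $\gamma\in\mathbb{F}_q$ removes the first equation and leaves the single condition $\sum_{i,j}\alpha_i x^{(i)}_{j}=0$ in $r_1+\cdots+r_l$ nonzero variables, whose number of solutions is $\psi_{r_1+\cdots+r_l}$ by the same coefficient-independence. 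Hence $\psi_{r_1+\cdots+r_l}=A_{r_1,\ldots,r_l}+(q-1)A^{\neq}$, which is exactly the claimed value $(\psi_{r_1+\cdots+r_l}-A_{r_1,\ldots,r_l})/(q-1)$.

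For the recursion ($\gamma=0$, $l>1$) I would condition on $t:=\sigma_l$. Given $\sigma_l=t$, the first $l-1$ blocks must satisfy $\sum_{i<l}\sigma_i=-t$ together with $\sum_{i<l}\alpha_i\sigma_i=-\alpha_l t$; subtracting $\alpha_l$ times the first relation from the second turns the latter into the homogeneous equation $\sum_{i<l}(\alpha_i-\alpha_l)\sigma_i=0$. The new weights $\beta_i:=\alpha_i-\alpha_l$ are again pairwise distinct and nonzero, so the number of fillings of the first $l-1$ blocks is the value of $A_{r_1,\ldots,r_{l-1}}$ for weights $\beta_i$ and total sum $-t$. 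Weighting the $t=0$ contribution by $\psi_{r_l}$ and each $t\neq 0$ contribution by $\varphi_{r_l}$, and using that summing this reduced count over all $t$ again strips the total-sum equation and yields $\psi_{r_1+\cdots+r_{l-1}}$, I obtain $A_{r_1,\ldots,r_l}=\varphi_{r_l}\psi_{r_1+\cdots+r_{l-1}}+(\psi_{r_l}-\varphi_{r_l})A_{r_1,\ldots,r_{l-1}}$. The elementary identity $\psi_{s}-\varphi_{s}=(-1)^{s}$, immediate from the closed forms for $\psi_s$ and $\varphi_s$, then produces the stated recursion.

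The one point needing care, and the main obstacle, is that this reduction replaces the weights $\alpha_i$ by $\beta_i=\alpha_i-\alpha_l$, so a priori the right-hand side refers to the count for a different weight system. I would resolve this by proving, simultaneously with the recursion and by induction on $l$, that $A_{r_1,\ldots,r_l}$ does not depend on the chosen pairwise distinct nonzero weights: the case $l=1$ gives $\psi_{r_1}$ regardless of $\alpha_1$, while the right-hand side of the recursion involves only the $r_i$ (through $\psi$ and $\varphi$) and the lower count $A_{r_1,\ldots,r_{l-1}}$, which is already weight-independent by the inductive hypothesis. This legitimizes the identification of the $\beta$-count with $A_{r_1,\ldots,r_{l-1}}$ and closes the induction.
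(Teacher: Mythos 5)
Your proposal is correct and follows essentially the same route as the paper's proof: induction on $l$ carried out simultaneously with the weight-independence of the count, reduction of the second equation to the homogeneous form with shifted coefficients $\alpha_i-\alpha_l$, conditioning on the sum of the last block weighted by $\psi_{r_l}$ or $\varphi_{r_l}$, the identity $\psi_s-\varphi_s=(-1)^s$, and the summation over all $\gamma$ (equivalently the scaling bijection) to settle the $\gamma\neq 0$ case. The only differences are presentational.
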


\proof
We proceed by induction on $l$ and we also show that the number of solutions does not depend on the values $\alpha_1,\ldots,\alpha_l$.  If $\ell=1$, it is clear that if $\gamma\neq0$ then the number of solutions is $0$. Also, if $\gamma=0$, this number is precisely $\psi_{r_1}$. Clearly, this does not depend on the value $\alpha_1$.

Suppose that Formula \eqref{NumberOfSolutions} holds for $\ell\geq 1$ and  that the number of solutions does not depend on the values $\alpha_1,\ldots,\alpha_l$. Consider $\ell+1$. We first deal with $\gamma=0$. The system $S_{r_1,\ldots,r_l,r_{l+1}}(0)$ can be written as 
\begin{equation*}
S^{\prime}_{r_1,\ldots,r_l,r_{l+1}}(0) : 
\begin{cases}
    x^{(1)}_{1}+ \ldots + x^{(1)}_{r_1}+x^{(2)}_{1}+ \ldots + x^{(2)}_{r_2}+\cdots+x^{(l+1)}_{1}+ \ldots + x^{(l+1)}_{r_{l+1}}=0\\
     (\alpha_1-\alpha_{l+1})(x^{(1)}_{1}+ \ldots + x^{(1)}_{r_1})+(\alpha_2-\alpha_{l+1})(x^{(2)}_{1}+ \ldots + x^{(2)}_{r_2})\\
     \hspace*{5 cm}+\cdots+(\alpha_l-\alpha_{l+1})(x^{(l)}_{1}+ \ldots + x^{(l)}_{r_l})=0 \\
    \prod_{i,j}x^{(i)}_{j}\ne 0.
\end{cases}
\end{equation*}
Each solution of $S^{\prime}_{r_1,\ldots,r_l,r_{l+1}}(0)$ is a solution of precisely one of the following systems
\begin{equation*}
S^{\prime\prime}_{r_1,\ldots,r_l,r_{l+1}}(\gamma) : 
\begin{cases}
    x^{(1)}_{1}+ \ldots + x^{(1)}_{r_1}+x^{(2)}_{1}+ \ldots + x^{(2)}_{r_2}+\cdots+x^{(l)}_{1}+ \ldots + x^{(l)}_{r_l}=\gamma\\
    x^{(l+1)}_{1}+ \ldots + x^{(l+1)}_{r_{l+1}}=-\gamma\\
     (\alpha_1-\alpha_{l+1})(x^{(1)}_{1}+ \ldots + x^{(1)}_{r_1})+(\alpha_2-\alpha_{l+1})(x^{(2)}_{1}+ \ldots + x^{(2)}_{r_2})\\
     \hspace*{5 cm}+\cdots+(\alpha_l-\alpha_{l+1})(x^{(l)}_{1}+ \ldots + x^{(l)}_{r_l})=0 \\
    \prod_{i,j}x^{(i)}_{j}\ne 0.
\end{cases}
\end{equation*}
Viceversa, each solution of a particular $S^{\prime\prime}_{r_1,\ldots,r_l,r_{l+1}}(\gamma) $ is a solution of $S^{\prime}_{r_1,\ldots,r_l,r_{l+1}}(0)$. 

The number of solutions of $S^{\prime\prime}_{r_1,\ldots,r_l,r_{l+1}}(\gamma) $ is 
$A_{r_1,\ldots,r_l}\psi_{r_{l+1}}$ if $\gamma=0$, and 
$$ \frac{\psi_{r_1+\cdots+r_{l}}-A_{r_1,\ldots,r_l}}{q-1}\varphi_{r_{l+1}}$$
otherwise. By hypothesis these numbers do not depend on the choice of $\alpha_1-\alpha_{l+1},\ldots,\alpha_{l}-\alpha_{l+1}$. Summing up, the number of solutions of $S_{r_1,\ldots,r_l,r_{l+1}}(0)$ is 
\begin{eqnarray*}
A_{r_1,\ldots,r_l}\psi_{r_{l+1}}+(q-1)\frac{(\psi_{r_1+\cdots+r_{l}}-A_{r_1,\ldots,r_l})}{q-1}\varphi_{r_{l+1}}&=&\psi_{r_1+\cdots+r_{l}}\varphi_{r_{l+1}}+A_{r_1,\ldots,r_l}(\psi_{r_{l+1}}-\varphi_{r_{l+1}})\\
&=&\psi_{r_1+\cdots+r_{l}}\varphi_{r_{l+1}}+(-1)^{r_{l+1}}A_{r_1,\ldots,r_l}.\\
\end{eqnarray*}
It is readily seen that the number of solutions of $S_{r_1,\ldots,r_l,r_{l+1}}(\gamma)=S_{r_1,\ldots,r_l,r_{l+1}}(\delta)$ for any non-zero $\gamma,\delta\in \mathbb{F}_q$. The claim follows.
\endproof

\begin{remark}
From Proposition \eqref{Prop:As} it follows that 
$$A_{r_1,\ldots,r_l}=\psi_{r_1+\cdots +r_{l-1}}\varphi_{r_{l}}+(-1)^{r_{2}+\cdots +r_l}\psi_{r_1}+\sum_{i=1}^{l-2}(-1)^{r_{l-i+1}+\cdots +r_l}\psi_{r_1+\cdots +r_{l-i-1}}\varphi_{r_{l-i}}.$$
\end{remark}

As a notation, for $r_1,\ldots,r_l$ all distinct from $0$, we denote by $A_{r_1,\ldots,r_l,0}$ the integer $A_{r_1,\ldots,r_l}$.

\begin{proposition}\label{Prop:r=0}
Let $r=0$. Then the number of solutions of \eqref{u'sistemone} is 
$$\Lambda=q^{k-1}-(q-1)^{h-s}q^{k-h}\psi_{s} + q^{k-h}A_{r_1,\ldots,r_l,h-s}.$$
\end{proposition}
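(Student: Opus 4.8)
The plan is to reduce the count to the $h$ relevant variables and then argue by complementary counting. Since $r=0$, the equation \eqref{iperpiano} only involves variables with index in $\{1,\dots,h\}$, and for $\pi$ to be a genuine hyperplane one must have $s\geq 1$. None of the variables $x_{h+1},\dots,x_k$ occurs in either equation of \eqref{u'sistemone}, so every solution in $(x_1,\dots,x_h)$ extends in exactly $q^{k-h}$ ways. Writing $\Lambda=q^{k-h}\Lambda'$, it therefore suffices to prove
$$\Lambda'=q^{h-1}-(q-1)^{h-s}\psi_{s}+A_{r_1,\dots,r_l,h-s},$$
where $\Lambda'$ is the number of $(x_1,\dots,x_h)$ satisfying \eqref{u'sistemone}. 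After relabelling (the second equation is symmetric in $x_1,\dots,x_h$) I would assume the hyperplane equation is $a_1x_1+\dots+a_sx_s=0$.

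First I would count the solutions of the single equation $a_1x_1+\dots+a_sx_s=0$ in $\mathbb{F}_q^h$: since $a_s\neq 0$, fixing $x_1,\dots,x_{s-1}$ and the free variables $x_{s+1},\dots,x_h$ determines $x_s$, giving $q^{h-1}$ solutions. This is the first term. It then remains to subtract the number $M$ of those solutions for which $(x_1+\dots+x_h)\,x_1\cdots x_h\neq 0$, that is, all $x_i\neq 0$ and $x_1+\dots+x_h\neq 0$, so that $\Lambda'=q^{h-1}-M$.

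To compute $M$ I would split it as $M=P-Q$, where $P$ counts the solutions of the hyperplane equation with $x_1,\dots,x_h$ all nonzero (no condition on the sum) and $Q$ counts those that in addition satisfy $x_1+\dots+x_h=0$. For $P$, the variables $x_{s+1},\dots,x_h$ are free nonzero, giving $(q-1)^{h-s}$ choices, while $x_1,\dots,x_s$ range over the nonzero solutions of $a_1x_1+\dots+a_sx_s=0$, which number $\psi_s$ by the value-independence recorded just after the definition of $\psi_s,\varphi_s$; hence $P=(q-1)^{h-s}\psi_s$, the second term. For $Q$, I would group $x_1,\dots,x_s$ according to the distinct values $\alpha_1,\dots,\alpha_l$ taken by the coefficients $a_1,\dots,a_s$, with multiplicities $r_1,\dots,r_l$ (so $r_1+\dots+r_l=s$), and regard $x_{s+1},\dots,x_h$ as a final block of $h-s$ variables whose coefficient in the hyperplane equation is $0$; then the system defining $Q$ is exactly $S_{r_1,\dots,r_l,h-s}(0)$ with last coefficient $\alpha_{l+1}=0$. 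By Proposition~\ref{Prop:As} its number of solutions does not depend on the chosen coefficient values, so $Q=A_{r_1,\dots,r_l,h-s}$ despite $\alpha_{l+1}=0$; substituting into $\Lambda'=q^{h-1}-P+Q$ yields the claimed formula.

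The main obstacle is the bookkeeping in this last step: one must check that the value-independence proved in Proposition~\ref{Prop:As} genuinely licenses taking the extra block's coefficient equal to $0$. This is fine, because the induction there only used that the successive differences $\alpha_i-\alpha_{l+1}$ remain nonzero and distinct, which holds when $\alpha_{l+1}=0$ precisely because $\alpha_1,\dots,\alpha_l$ are nonzero and distinct. One must also treat the degenerate case $s=h$, where there is no final block; this is exactly the convention $A_{r_1,\dots,r_l,0}=A_{r_1,\dots,r_l}$ recorded immediately before the statement.
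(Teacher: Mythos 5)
Your proof is correct and follows essentially the same route as the paper's: the same complementary count via $\psi_s$ for the portion of the hyperplane lying on $x_1\cdots x_h=0$, and the same grouping of $x_1,\dots,x_s$ into blocks by coefficient value, feeding into Proposition~\ref{Prop:As} to handle the portion with $x_1+\cdots+x_h=0$ and $x_1\cdots x_h\neq 0$. The only difference is one of packaging: where you invoke Proposition~\ref{Prop:As} directly with last coefficient $\alpha_{l+1}=0$ (correctly observing that the induction only needs the differences $\alpha_i-\alpha_{l+1}$ to be distinct and nonzero), the paper re-runs that single inductive step explicitly by splitting on the value $\gamma=x_1+\cdots+x_s$.
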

\proof

Without loss of generality we can assume $(i_1,\ldots,i_s)=(1,\ldots,s)$. 
As in Proposition \ref{Prop:r>=1} we count the number of solutions of two different systems, namely 

 \begin{equation}\label{System:1}
    \begin{cases} 
    
     a_{1}x_{1}+\ldots+ a_{s}x_{s}=0 \\
     x_1\cdot \ldots \cdot x_h=0
    
\end{cases}
\end{equation}
and 
 \begin{equation}\label{System:2}
\begin{cases} 
     x_{1}+ \ldots + x_{h}=0\\
     a_1x_{1}+\ldots+ a_{s}x_{s}=0 \\
    x_{1}\cdot \ldots \cdot x_{h}\ne 0.
\end{cases}
\end{equation}

In order to count the number of solutions of \eqref{System:1}, we consider 
 \begin{equation*}
    \begin{cases} 
    
     a_{1}x_{1}+\ldots+ a_{s}x_{s}=0 \\
     x_1\cdot \ldots \cdot x_h\ne 0.
 
\end{cases}
\end{equation*}
Here, we have $(q-1)^{h-s}q^{k-h}$ choices for $x_{s+1},\ldots, x_k$, while for the remaining coordinates we have $\psi_{s}$ possibilities: in total $(q-1)^{h-s}q^{k-h}\psi_{s}$ solutions.

This shows that System \eqref{System:1}
has $q^{k-1}-(q-1)^{h-s}q^{k-h}\psi_{s}$ solutions.

We now deal with System \eqref{System:2}.

We write \eqref{System:2} (up to a permutation of $(1,\ldots,s)$) in blocks of proportionality as 
\begin{equation}\label{System:2N}
\begin{cases} 
     x_{1}+ \ldots + x_{s}+x_{s+1}+\ldots+x_h=0\\
     \alpha_1(x_{1}+\ldots+x_{r_1})+\ldots + \alpha_l(x_{s-r_l+1}+\ldots+x_{s})=0 \\
    x_{1}\cdot \ldots \cdot x_{s}\cdot x_{s+1}\ldots \cdot x_{h}\ne 0
    \end{cases},
\end{equation}
for some $l\geq 1$, $r_1,\ldots, r_l\geq 1$ such that $r_1+\ldots+ r_l= s$, $\alpha_i$ pairwise distinct and nonzero. 

Note that if $s=h$ then the number of solutions of \eqref{System:2N} is  $q^{k-h}A_{r_1,\ldots,r_l}=q^{k-h}\psi_0 A_{r_1,\ldots,r_l}=q^{k-h}\psi_0 A_{r_1,\ldots,r_l,0}$.

Suppose now $s<h$. 
Each solution of \eqref{System:2N} is a solution of a certain 
\begin{equation}\label{Sistem:A}
S_{\gamma} : \begin{cases}
    x_{1}+ \ldots + x_{s}=\gamma\\
    x_{s+1}+\ldots+x_h=-\gamma\\
     \alpha_1(x_{1}+\ldots+x_{r_1})+\ldots + \alpha_l(x_{s-r_l+1}+\ldots+x_{s})=0 \\
    x_{1}\cdot \ldots \cdot x_{s}\cdot x_{s+1}\ldots \cdot x_{h}\ne 0.
\end{cases}
\end{equation}

By Proposition \ref{Prop:As}, for $\gamma=0$ System \eqref{Sistem:A} has $q^{k-h}\psi_{h-s}A_{r_1,\ldots,r_l}$ solutions, whereas for $\gamma\neq 0$, the number of solutions is $q^{k-h}\varphi_{h-s}(\psi_{r_1+\cdots+r_{l}}-A_{r_1,\ldots,r_l})/(q-1)$.
Summing up, the number of solutions of \eqref{System:2} is 

\begin{eqnarray*}
q^{k-h}\psi_{h-s}A_{r_1,\ldots,r_l}+q^{k-h}\varphi_{h-s}(\psi_{r_1+\cdots+r_{l}}-A_{r_1,\ldots,r_l})&=&q^{k-h}(\psi_{r_1+\cdots+r_{l}}\varphi_{h-s}+(-1)^{h-s}A_{r_1,\ldots,r_l})\\
&=&q^{k-h}A_{r_1,\ldots,r_l,h-s}
\end{eqnarray*}

The claim follows.
\endproof

Finally, we provide the weight spectrum and the weight distribution of the code $\mathrm{C}_{D_1}$ answering to \cite[Open Problem 37]{TQLZ}. 

For an $l$-tuple  $r_1,\ldots,r_l$, we say that it is of type $(i_1,\ldots,i_j)$ if there are $j$ distinct values among  $r_1,\ldots,r_l$ and they are repeated $i_1,\ldots,i_j$ times.

\begin{theorem}\label{weightDistr1}
The weight spectrum of the minimal code $\mathrm{C}_{D_1}$ is 
\begin{equation*}
    \left\{n-q^{k-h-1}(q^h+\psi_{h}-(q-1)^h)+1, n- q^{k-1}-(q-1)^{h-s}q^{k-h}\psi_{s} + q^{k-h}A_{r_1,\ldots,r_l,h-s}+1\right\},
\end{equation*}
where $s$ ranges in $1, \ldots, h$ and $r_1+\cdots+r_l=s$. 
Moreover, the number $B_i$ of codewords of weight $i$ is
\begin{itemize}
    \item[(i)] $q^k-q^h$, if $i=n-q^{k-h-1}(q^h+\psi_{h}-(q-1)^h)+1$;
    \item[(ii)] $\binom{h}{s}\binom{s}{r_1;\ldots; r_l}\binom{l}{i_1;\ldots; i_j}\binom{q-1}{l}$, if 
    $i=n- q^{k-1}-(q-1)^{h-s}q^{k-h}\psi_{s} + q^{k-h}A_{r_1,\ldots,r_l,h-s}+1 $ and $r_1,\ldots,r_l$ is of type $(i_1,\ldots,i_j)$.
    
\end{itemize}

\end{theorem}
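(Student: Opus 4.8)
The plan is to exploit the bijection between nonzero codewords of $\mathrm{C}_{D_1}$ and hyperplanes $\pi$ through the origin: since $\dim \mathrm{C}_{D_1}=k$, the map sending a normal vector $\alpha=(\alpha_1,\ldots,\alpha_k)\in\mathbb{F}_q^k\setminus\{\overline{0}\}$ to the codeword $(\pi(P_1),\ldots,\pi(P_n))$ is injective, so I may enumerate codewords by enumerating such $\alpha$. As recorded before Proposition \ref{Prop:r>=1}, the weight of the codeword attached to $\pi$ equals $n-\Lambda+1$, where $\Lambda$ is the number of solutions of system \eqref{u'sistemone}. Thus the whole computation reduces to (a) reading off the two possible values of $\Lambda$ from Propositions \ref{Prop:r>=1} and \ref{Prop:r=0}, and (b) counting, for each value, how many $\alpha$ produce it.

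First I would classify each $\alpha$ by the pair $(s,r)$, where $s$ is the number of nonzero entries among $\alpha_1,\ldots,\alpha_h$ and $r$ the number among $\alpha_{h+1},\ldots,\alpha_k$, exactly as in \eqref{iperpiano}. If $r\geq 1$, Proposition \ref{Prop:r>=1} gives $\Lambda=q^{k-h-1}(q^h+\psi_h-(q-1)^h)$, independently of $s$ and of the coefficient values; this yields the single weight in item (i), and the number of such $\alpha$ is the number of vectors whose last $k-h$ coordinates are not all zero, namely $q^k-q^h$ (automatically nonzero). This settles (i).

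The substantive case is $r=0$, where $\alpha$ is supported on the first $h$ coordinates with $s\geq 1$ nonzero entries. Grouping the nonzero coefficients $a_1,\ldots,a_s$ into their blocks of equal value (i.e.\ blocks of proportionality for the two-equation system) gives multiplicities $r_1,\ldots,r_l$ with $r_1+\cdots+r_l=s$; Proposition \ref{Prop:r=0} then evaluates $\Lambda=q^{k-1}-(q-1)^{h-s}q^{k-h}\psi_s+q^{k-h}A_{r_1,\ldots,r_l,h-s}$, which by the independence statement built into Proposition \ref{Prop:As} depends only on $s$ and on the multiset $\{r_1,\ldots,r_l\}$, not on the actual values $\alpha_i$. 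Via $n-\Lambda+1$ this produces the second family of weights in item (ii). To obtain the multiplicity I would count the vectors $\alpha$ of a prescribed profile as follows: choose the support among the first $h$ coordinates in $\binom{h}{s}$ ways; choose the $l$ distinct nonzero values to be used in $\binom{q-1}{l}$ ways; and count the assignments of these $l$ labelled values to the $s$ support positions realizing the fiber-size multiset $\{r_1,\ldots,r_l\}$. This last count is $\binom{l}{i_1;\ldots;i_j}\binom{s}{r_1;\ldots;r_l}$: the multinomial $\binom{s}{r_1;\ldots;r_l}$ distributes the positions into ordered blocks of the prescribed sizes for one fixed ordered size-profile, while $\binom{l}{i_1;\ldots;i_j}$ records the number of ways of matching the multiset of block sizes — which has $j$ distinct values with multiplicities $i_1,\ldots,i_j$ — to the labelled values. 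Multiplying the three factors gives formula (ii).

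The main obstacle I anticipate is precisely this bookkeeping with repeated block sizes: one must check that the indistinguishability of blocks of equal size is compensated exactly by the $\binom{l}{i_1;\ldots;i_j}$ factor, so as neither to over- nor under-count, and that the coefficient-independence of $A_{r_1,\ldots,r_l}$ legitimately lets one aggregate all $\alpha$ of a given profile under a single weight. I would validate the final answer by the consistency check that the multiplicities sum correctly: the $r\geq 1$ contribution $q^k-q^h$ together with $\sum_{s=1}^h\binom{h}{s}(q-1)^s=q^h-1$ from the $r=0$ case account for exactly the $q^k-1$ nonzero codewords, confirming that no profile has been missed or double-counted.
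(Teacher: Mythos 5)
Your proposal is correct and follows essentially the same route as the paper: it reduces weights to the solution counts $\Lambda$ of \eqref{u'sistemone} via Propositions \ref{Prop:r>=1} and \ref{Prop:r=0}, splits on whether $(\alpha_{h+1},\ldots,\alpha_k)$ vanishes, and obtains the multiplicities by the same factorization $\binom{h}{s}\binom{q-1}{l}\binom{s}{r_1;\ldots;r_l}\binom{l}{i_1;\ldots;i_j}$. The only addition beyond the paper's argument is your sanity check that the multiplicities sum to $q^k-1$, which is a nice touch but not needed.
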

\proof
The claim on the weight spectrum follows from Propositions \ref{Prop:r>=1} and \ref{Prop:r=0}.

Let $\bar{i}=n-q^{k-h-1}(q^h+\psi_{h}-(q-1)^h)+1$. By Proposition \ref{Prop:r>=1}, every hyperplane $H: \alpha_1 x_1+\ldots + \alpha_k x_k=0$ with $(\alpha_{h+1},\ldots, \alpha_{k}) \neq (0,\ldots, 0)$ induces a codeword of weight $\bar{i}$, whence $B_{\bar{i}}=q^k-q^h$.

Assume now $\bar{i}=n- q^{k-1}-(q-1)^{h-s}q^{k-h}\psi_{s} + q^{k-h}A_{r_1,\ldots,r_l,h-s}+1 $ for a partition $(r_1,\ldots,r_l)$ of $s$, $s \in [1,\ldots,h]$, $l\geq 1$, of type $i_1,\ldots,i_j$. 
We count the number of $k$-tuples of $(\mathbb{F}_q)^k$ such that the last $k-h$ entries are zero and that admit, among the first $h$ entries, $l$ distinct nonzero values and $h-s$ zeros.

 The $h-s$ zero entries can be chosen in $\binom{h}{s}$ ways among the first $h$ entries. The possible $l$-tuples of nonzero elements of $\mathbb{F}_q$ are $\binom{q-1}{l}$. Finally for any chosen $l$-tuple $\alpha_1\,\ldots,\alpha_l$, $\binom{s}{r_1;\ldots; r_l}\binom{l}{i_1;\ldots; i_j}$
counts the number $s$-uples where $\alpha_1,\ldots,\alpha_l$ appear exactly $r_1,\ldots,r_l$ times. 

Each hyperplane $H$ corresponding to such a $k$-tuple 
 induces,  by Proposition \ref{Prop:r=0}, a codeword of weight $\bar{i}$. 

\endproof

\begin{remark}\label{Remark1}
The weights in Theorem \ref{weightDistr1} (ii) are not all distinct. For instance, let $h=4$ and $k\geq h$. Then the weights corresponding to the choices $s=4$, $r_1=3$, $ r_2=1$ and $s=2$, $r_1=1$, $r_2=1$ are equal. 
\end{remark}

\section{Family 2}
By \cite[Theorem 23]{TQLZ} it is readily seen that the dimension of $\mathrm{C}_{D_2}$ is $k$.

\begin{proposition}\label{length:Family2}
Let 
$$\Gamma(h,q) := \sum_{s =1}^{\min(h,(q-1)/2)} \frac{(q-1) (q-3) \cdots (q-2s+1)}{s!}\mathcal{S}(h,s),$$
where $\mathcal{S}(x,y)$ is the number of surjective functions from a set of size $x$ to a set of size $y\leq x$.

The code $\mathrm{C}_{D_2}$ has length

\begin{equation*}
\left\{\begin{array}{ll}
q^{k-h}\left( q^h- q (q-1)  \cdots  (q-h+1) \right)-1,& \textrm{ if } p=2 \textrm{ and } h\leq q;\\
q^k-1 ,& \textrm{ if } p=2 \textrm{ and } h>q;\\
q^{k-h}\left(q^h-\Gamma(h,q)-h\Gamma(h-1,q) \right)-1,& \textrm{ if } p>2.
\end{array}
\right.
\end{equation*}

\end{proposition}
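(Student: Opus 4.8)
The length of $\mathrm{C}_{D_2}$ equals $\#D_2$, so the plan is to count the nonzero points of $AG(k,q)$ lying on $\prod_{1\le i<j\le h}(x_i+x_j)=0$. First I would pass to the complement: since the origin satisfies the product equation, $\#D_2 = q^k - M - 1$, where $M$ is the number of points of $AG(k,q)$ with $\prod_{1\le i<j\le h}(x_i+x_j)\neq 0$. As the defining condition involves only the first $h$ coordinates, the remaining $k-h$ are free, so $M = q^{k-h}N$ with
\[
N := \#\{(x_1,\dots,x_h)\in \mathbb{F}_q^h : x_i+x_j\neq 0 \text{ for all } 1\le i<j\le h\}.
\]
Everything then reduces to evaluating $N$, and the three cases in the statement correspond to three regimes for this count, plugged into $\#D_2 = q^{k-h}(q^h-N)-1$.

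In characteristic $2$ one has $x_i+x_j = x_i - x_j$, so $x_i+x_j\neq 0$ just means $x_i\neq x_j$; hence $N$ is the number of ordered $h$-tuples of pairwise distinct elements of $\mathbb{F}_q$, which is $q(q-1)\cdots(q-h+1)$ when $h\le q$ and $0$ when $h>q$. This yields the first two cases immediately.

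For $p>2$ the key observation is that $a+a=2a\neq 0$ for $a\neq 0$, so a nonzero value may be repeated freely, whereas $0$ may occur at most once (two zeros sum to $0$) and, for each $a\neq 0$, the values $a$ and $-a$ cannot both occur. I would partition $\mathbb{F}_q^*$ into the $(q-1)/2$ pairs $\{a,-a\}$, and first count the tuples with all entries nonzero: grouping by the set of $s$ distinct values used, such a set must take one representative from each of $s$ distinct pairs, which can be done in $\binom{(q-1)/2}{s}2^s = \frac{(q-1)(q-3)\cdots(q-2s+1)}{s!}$ ways, while the assignments of the $h$ positions surjectively onto these $s$ values number $\mathcal{S}(h,s)$. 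Summing over $s$ shows that $\Gamma(h,q)$ counts exactly the tuples in $(\mathbb{F}_q^*)^h$ with no two entries summing to $0$. I would then account for a single zero entry: it may occupy any of the $h$ positions and, since $0+a=a\neq 0$, imposes no new constraint on the remaining $h-1$ (nonzero, pairwise non-opposite) entries, contributing $h\,\Gamma(h-1,q)$. Hence $N=\Gamma(h,q)+h\,\Gamma(h-1,q)$, giving the third case.

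The main obstacle is the odd-characteristic evaluation of $N$: one must correctly identify which coincidences among coordinates are allowed (nonzero repeats permitted, a single zero permitted, opposite pairs forbidden), recognize the identity $\binom{(q-1)/2}{s}2^s = \frac{(q-1)(q-3)\cdots(q-2s+1)}{s!}$ that turns the representative-choosing count into the product appearing in $\Gamma(h,q)$, and verify that inserting one zero genuinely decouples into an independent $\Gamma(h-1,q)$ count. The characteristic-two and the degenerate $h>q$ cases are then routine specializations.
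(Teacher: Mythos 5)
Your proposal is correct and follows essentially the same route as the paper: reduce to counting the $h$-tuples with no two coordinates summing to zero, handle $p=2$ via the identification $x_i+x_j=0\iff x_i=x_j$, and for $p>2$ split into the all-nonzero case (choosing $s$ pairwise non-opposite values, counted by $\Gamma(h,q)$ via surjections) and the single-zero case contributing $h\,\Gamma(h-1,q)$. Your identity $\binom{(q-1)/2}{s}2^s=\frac{(q-1)(q-3)\cdots(q-2s+1)}{s!}$ is just a repackaging of the paper's sequential choice of representatives, so there is no substantive difference.
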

\proof
First, we count the number of $h$-tuples for which 
\begin{equation}\label{Condition1}
\textrm{no pairs of entries } (x_i,x_j), 1\leq i< j\leq h, \textrm{ satisfy } x_i+x_j=0.
\end{equation}

Assume  $p=2$. In this case the number of $h$-tuples for which at least one pair of entries $(x_i,x_j)$, $1\leq i< j\leq h$, satisfies $x_i+x_j=0$ is $q^h- q (q-1)  \cdots  (q-h+1)$ (in particular it is $q^h$ if $h>q$). 

From now on, let us consider the case $p>2$. We distinguish two cases.

\begin{itemize}
   
    \item[(1)] All entries are nonzero. Suppose that  the $h$ entries assume exactly $s$ distinct values $\alpha_1,\ldots,\alpha_s$ of $\mathbb{F}_q^*$. Since $\alpha_i\neq \pm \alpha _j$ for any $i\neq j$, $s$ can be at most $(q-1)/2$. For a given chosen number $s\in \{1,\ldots,\min \{h,(q-1)/2\}\}$, there are $(q-1)(q-3)\cdots (q-2s+1)/s!$ possible choices for the set  $\{\alpha_1,\ldots,\alpha_s\}$. In fact $\alpha_1$ can be chosen in $q-1$ ways, $\alpha_2\neq \pm \alpha_1$, $\alpha_3\notin \{\pm \alpha_1,\pm \alpha_2\}$ and so on. Now, when the set $\{\alpha_1,\ldots,\alpha_s\}$ is fixed, the $h$ entries can assume only values $\{\alpha_1,\ldots,\alpha_s\}$. The number of possible $h$-tuples equals the number $\mathcal{S}(h,s)$ of surjective functions from $\{1,\ldots,h\}$ to  $\{\alpha_1,\ldots,\alpha_s\}$. The number of $h$-tuples satisfying \eqref{Condition1} is $\Gamma(h,q)$.
     \item[(2)] One entry is $0$. In this case, any other entry is nonzero. To the other $h-1$  entries we can apply the same argument as above. Since the unique $0$ entry can appear in $h$ different positions, in this case the number of $h$-tuples satisfying \eqref{Condition1} is $h\Gamma(h-1,q)$.
     
\end{itemize}
Summing up, there are in total $\Gamma(h,q)+h\Gamma(h-1,q)$ $h$-tuples satisfying \eqref{Condition1}: the number of $h$-tuples for which at least one pair of entries $(x_i,x_j)$, $1\leq i< j\leq h$, satisfies $x_i+x_j=0$ is $q^h-\Gamma(h,q)-h\Gamma(h-1,q)$. 
The length of the code $\mathrm{C}_{D_2}$ is given by the number of $k$-tuples in $\mathbb{F}_q$ for which the first $h$ entries can be chosen in $q^h-\Gamma(h,q)-h\Gamma(h-1,q)$ ways.

\endproof

\begin{proposition}\label{pesi:family2}
Let  $q>5$ and $p>2$. Then the minimum weight in $\mathrm{C}_{D_2}$ is realized by the hyperplanes $x_i+x_j=0$, $1\leq i<j\leq h$.
\end{proposition}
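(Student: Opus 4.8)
The plan is to recast the statement geometrically and then exploit the fact that the quadratic factors of the defining polynomial already cut out full hyperplanes. Exactly as in the Family~1 analysis, the codeword induced by a hyperplane $\pi\colon \sum_{i=1}^k a_ix_i=0$ has weight $n+1-\Lambda(\pi)$, where $\Lambda(\pi)$ is the number of points of $AG(k,q)$ lying on $\pi$ and annihilating $F:=\prod_{1\le i<j\le h}(x_i+x_j)$ (the origin is counted in $\Lambda(\pi)$ but does not belong to $D_2$). Thus minimising the weight is the same as maximising $\Lambda(\pi)$. Since a hyperplane through the origin has exactly $q^{k-1}$ points, one has $\Lambda(\pi)\le q^{k-1}$ for every $\pi$, so every nonzero codeword has weight at least $n+1-q^{k-1}$.

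Next I would observe that the hyperplanes $\pi_{ij}\colon x_i+x_j=0$ with $1\le i<j\le h$ attain this lower bound. Indeed $x_i+x_j$ is one of the factors of $F$, so $F$ vanishes at every point of $\pi_{ij}$; hence $\pi_{ij}$ is entirely contained in the zero locus of $F$ and $\Lambda(\pi_{ij})=q^{k-1}$. Each such $\pi_{ij}$ therefore induces a codeword of weight $n+1-q^{k-1}$, which by the previous paragraph is the smallest possible, and combining this with the length $n$ recorded in the $p>2$ case of Proposition~\ref{length:Family2} yields the explicit value of the minimum weight. The assumption $h\ge 3$ guarantees that $D_2\not\subseteq \pi_{ij}$ (e.g.\ points with $x_i+x_\ell=0$, $\ell\ne j$, lie in $D_2$ but off $\pi_{ij}$), so these minimum-weight codewords are genuinely nonzero.

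The substantive point, and the place where I expect the real work to sit, is to show that \emph{no other} hyperplane does as well, so that the minimum is realised precisely by the $\pi_{ij}$. Concretely I would prove $\Lambda(\pi)<q^{k-1}$ whenever $\pi\neq\pi_{ij}$ for all $1\le i<j\le h$, i.e.\ that such a $\pi$ is not contained in $V(F)=\bigcup_{1\le i<j\le h}\pi_{ij}$. For each pair the intersection $\pi\cap\pi_{ij}$ has codimension two and hence at most $q^{k-2}$ points, so $\pi\cap V(F)$ contains at most $\binom{h}{2}q^{k-2}$ points; as soon as this is strictly smaller than $q^{k-1}$ there must be a point of $\pi$ off $V(F)$, giving $\Lambda(\pi)\le q^{k-1}-1$ and a strictly larger weight. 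Here the hypothesis $p>2$ ensures that the $\binom{h}{2}$ forms $x_i+x_j$ are pairwise non-proportional nonzero linear forms, and the hypothesis that $q$ be large is what makes the covering estimate bite.

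The main obstacle is sharpening this estimate to the stated range. The crude union bound above only yields strict inequality for $q>\binom{h}{2}$, so to cover all $h$ with a single condition such as $q>5$ one must argue more carefully: for instance by inclusion--exclusion on the subspaces $\pi\cap\pi_{ij}$, or by invoking a Schwartz--Zippel / combinatorial Nullstellensatz statement to the effect that a product of $\binom{h}{2}$ nonzero linear forms cannot vanish identically on a $(k-1)$-dimensional $\mathbb{F}_q$-space once $q$ is large enough. I would expect the bookkeeping of this last step — reconciling the worst case in $h$ with the uniform hypothesis $q>5$, $p>2$ — to be the only delicate part, the rest of the argument being the elementary divisibility observation of the second paragraph.
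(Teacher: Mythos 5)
Your first two paragraphs agree with the paper: the weight of the codeword induced by $\pi$ is $n+1-\Lambda(\pi)$ with $\Lambda(\pi)\le q^{k-1}$, and each $\pi_{ij}\colon x_i+x_j=0$ attains $\Lambda=q^{k-1}$ because $x_i+x_j$ divides $F$, so these hyperplanes realize the weight $n-q^{k-1}+1$, which bounds every nonzero weight from below. The genuine gap is in the converse, which is the whole content of the proposition: you must show that every hyperplane $\pi$ different from all the $\pi_{ij}$ contains a point of $AG(k,q)\setminus(D_2\cup\{0\})$. Your union bound $\#(\pi\cap V(F))\le\binom{h}{2}q^{k-2}$ gives this only when $q>\binom{h}{2}$, and since $h$ may be as large as $k$, this leaves the whole range $5<q\le\binom{h}{2}$ uncovered. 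The repairs you gesture at do not close it: a Schwartz--Zippel or degree argument is vacuous once $\binom{h}{2}\ge q$, and the obstruction is real in general --- a product of $q+1$ pairwise non-proportional linear forms in two variables vanishes identically on $\mathbb{F}_q^2$, since the $q+1$ lines through the origin cover the plane. So ruling out that the specific forms $x_i+x_j$ restricted to $\pi$ cover $\pi$ requires an argument that uses their particular shape, and that argument is exactly what is missing from your proposal.

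The paper closes this gap not by counting but by an explicit point construction, split into three cases according to the shape of $H$ (two nonzero coefficients with ratio $\ne 1$; a coefficient $\beta\ne 0,-1$ together with at least one more variable; all nonzero coefficients equal with at least three variables involved). In each case it exhibits a point of $H$ all of whose coordinates equal $1$ except in at most three positions, whose values are parameters $\lambda,\mu$ constrained by the equation of $H$. Since $p>2$ gives $1+1\ne 0$, membership in $D_2$ then imposes only a bounded number of forbidden values on $\lambda$ and $\mu$ (at most four on $\mu$, fewer on $\lambda$), \emph{independently of $h$}, and $q>5$ guarantees an admissible choice. This is the mechanism that makes the hypothesis uniform in $h$; to complete your proof you would need to replace the covering estimate of your third and fourth paragraphs with a construction of this kind.
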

\proof
It is readily seen that all the hyperplanes $x_i+x_j=0$,  $1\leq i<j\leq h$, contain $q^{k-1}-1$ points of $\mathcal{D}_2$ and therefore they correspond to minimum weight  codewords. Let $H$ be an hyperplane different from $x_i+x_j=0$, $1\leq i<j\leq h$. 
\begin{itemize}
    \item If $H : x_i+\alpha x_j=0$ for some $i\neq j$ and $\alpha \neq 1$ then the point $$(1,\ldots,1,\underbrace{-\alpha}_i,1,\ldots,1)\in H\setminus    \left(\mathcal{D}_2 \cup \{0\}\right)$$ and therefore $w(c_H)>n-q^{k-1}+1$, where $n$ is the length of $\mathrm{C}_{D_2}$. 
    \item Suppose now that $H: x_i=\beta x_l+\sum_{j\in J}\alpha_j x_j$, with $\#J\geq 1$, $\beta\neq 0,-1$, $l\notin J$. Let $\lambda \in \mathbb{F}_q\setminus \{-1,-(\sum_{j\in J}\alpha_j)/(\beta+1),-(1+\sum_{j\in J}\alpha_j)/\beta\}$. Then the point $$(1,\ldots,1,\underbrace{\lambda}_l,1,\ldots,1,\underbrace{\lambda\beta+\sum_{j\in J}\alpha_j}_i,1,\ldots,1)\in H\setminus \left(\mathcal{D}_2 \cup \{0\}\right)$$
    therefore $w(c_H)>n-q^{k-1}+1$. 
    \item Consider now the case $H: x_i=-x_l-x_s-\sum_{j\in J}x_j$, with $\#J\geq 0$. Let $\lambda \in \mathbb{F}_q\setminus \{-1,-\#J\}$, $\mu \in \mathbb{F}_q \setminus \{-1,-\lambda,-\#J,1-\lambda-\#J\}$.  Then the point $$(1,\ldots,1,\underbrace{\lambda}_l,1,\ldots,1,\underbrace{\mu}_s, 1,\ldots,1,\underbrace{-\lambda -\mu -\#J}_i,1,\ldots,1)\in H\setminus \left(\mathcal{D}_2 \cup \{0\}\right)$$
    therefore $w(c_H)>n-q^{k-1}+1$.
\end{itemize}
\endproof

\section{Family 3}

By \cite[Theorem 23]{TQLZ} it is readily seen that the dimension of $\mathrm{C}_{D_3}$ is $k$.

\begin{proposition}
Let 
$\Gamma(h,q)$ be defined as in Proposition \ref{length:Family2}.
The code $\mathrm{C}_{D_3}$ has length

\begin{equation*}
\left\{\begin{array}{ll}
q^{k-h}\left(q^h-(q-1) \cdots (q-h)\right)-1, & \textrm{ if } p=2 \textrm{ and } h\leq q+1; \\
q^{k}-1, & \textrm{ if } p=2 \textrm{ and } h> q+1;\\
q^{k-h}\left(q^h-\Gamma(h,q)\right)-1,& \textrm{ if } p>2.
\end{array}
\right.
\end{equation*}

\end{proposition}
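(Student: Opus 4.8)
The plan is to compute $|D_3|$, the length of $\mathrm{C}_{D_3}$, by passing to the complement. Since the coordinates $x_{h+1},\dots,x_k$ do not appear in the defining product, they range freely over $\mathbb{F}_q$; writing $M$ for the number of $h$-tuples $(x_1,\dots,x_h)$ satisfying $\prod_{i=1}^h x_i\prod_{1\le i<j\le h}(x_i+x_j)=0$, the number of $k$-tuples meeting the condition equals $q^{k-h}M$, and subtracting the origin yields $|D_3|=q^{k-h}M-1$. It is convenient to set $M=q^h-N$, where $N$ counts the $h$-tuples in the complement, that is, those with all of $x_1,\dots,x_h$ nonzero and with $x_i+x_j\ne 0$ for every $1\le i<j\le h$. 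Thus everything reduces to evaluating $N$ in each characteristic.

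For $p>2$, I would observe that the complement condition for $D_3$ is precisely case (1) in the proof of Proposition \ref{length:Family2}: all entries nonzero and no pair summing to zero. The extra factor $\prod_{i=1}^h x_i$ in the definition of $D_3$ is exactly what forbids the ``one zero entry'' configurations that contributed the term $h\,\Gamma(h-1,q)$ for $D_2$, so only the all-nonzero count survives and $N=\Gamma(h,q)$. This gives $|D_3|=q^{k-h}(q^h-\Gamma(h,q))-1$, as claimed.

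For $p=2$, the key simplification is that $x_i+x_j=0$ is equivalent to $x_i=x_j$. Hence the complement consists of the $h$-tuples whose entries are nonzero and pairwise distinct, i.e.\ the injective maps $\{1,\dots,h\}\to \mathbb{F}_q^*$. Their number is the falling factorial $(q-1)(q-2)\cdots(q-h)$, which is strictly positive for $h\le q-1$ and vanishes as soon as $h\ge q$ (one factor becomes $q-q=0$). Substituting $N=(q-1)(q-2)\cdots(q-h)$ into $|D_3|=q^{k-h}(q^h-N)-1$ yields the first line of the characteristic-two formula; and since the product is zero for every $h\ge q$, this same expression collapses to $q^k-1$ for $q\le h\le q+1$, in agreement with the explicit value $q^k-1$ recorded for $h>q+1$.

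On the main obstacle: there is very little genuinely new work, since the hard combinatorial count has already been carried out for Family 2. The only points requiring care are the correct identification of the complement of $D_3$ with the all-nonzero stratum of the Family-2 analysis in odd characteristic, and the bookkeeping of the vanishing falling factorial in characteristic two, so that the two displayed sub-cases glue together consistently at the boundary $h\approx q$.
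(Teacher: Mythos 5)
Your proposal is correct and follows essentially the same route as the paper: both reduce to counting the $h$-tuples with all entries nonzero and no pair summing to zero, invoke case (1) of the Family 2 count to get $\Gamma(h,q)$ when $p>2$, and use the falling factorial $(q-1)\cdots(q-h)$ (counting injections into $\mathbb{F}_q^*$, since $x_i+x_j=0$ means $x_i=x_j$) when $p=2$. Your remark that the vanishing of the product for $h\geq q$ makes the two characteristic-two sub-cases consistent is a useful detail the paper only states parenthetically.
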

\proof
First, we investigate the number of $h$-tuples for which 
\begin{equation}\label{Condition3}
\textrm{no coordinates are zero and no pairs } (x_i,x_j), 1\leq i< j\leq h, \textrm{ satisfy } x_i+x_j=0.
\end{equation}
Assume first $p>2$.
By the proof of Proposition \ref{length:Family2} (case (1)) the number of $h$-tuples satisfying \eqref{Condition3} equals $\Gamma(h,q)$. Therefore the number of $h$-tuples for which one coordinate is zero or at least one pair of entries $(x_i,x_j)$, $1\leq i< j\leq h$, satisfies $x_i+x_j=0$ is $q^h-\Gamma(h,q)$. The length of the code $\mathrm{C}_{D_3}$ is given by the number of $k$-tuples in $\mathbb{F}_q$ for which the first $h$ entries are chosen in such $q^h-\Gamma(h,q)$ ways. 

Assume now $p=2$. In this case the number
of $h$-tuples for which one coordinate is zero or at least one pair of entries $(x_i,x_j)$, $1\leq i< j\leq h$, satisfies $x_i+x_j=0$ is $q^h-(q-1) \cdots  (q-h)$ (in particular it is $q^h$ if $h>q+1$). The claim follows.
\endproof

\begin{proposition}
Let  $q>5$ and $p>2$. Then the minimum weight in $\mathrm{C}_{D_3}$ is realized by the hyperplanes $x_i+x_j=0$ and $x_i=0$, $1\leq i<j\leq h$.
\end{proposition}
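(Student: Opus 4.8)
The plan is to adapt the proof of Proposition \ref{pesi:family2} to the extra defining condition $\prod_{i=1}^h x_i=0$ that distinguishes $D_3$ from $D_2$. Recall that the codeword $c_H$ attached to a hyperplane $H$ through the origin has weight $n-\Lambda+1$, where $\Lambda$ counts the points of $AG(k,q)$ on $H$ satisfying the defining relation of $D_3$; equivalently $H$ carries $\Lambda-1$ points of $D_3$, so minimizing the weight means maximizing the number of points of $D_3$ on $H$. Every hyperplane through the origin has exactly $q^{k-1}-1$ nonzero points, and the hyperplanes $x_i=0$ and $x_i+x_j=0$ with $1\le i<j\le h$ have \emph{all} their nonzero points in $D_3$ (one of the first $h$ coordinates vanishes, or one pair sums to zero); hence they attain $\Lambda=q^{k-1}$ and the weight $n-q^{k-1}+1$, and since no hyperplane can do better they realize the minimum. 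The substance of the proof is the converse: for every other hyperplane $H$ I would exhibit a \emph{good} point on $H$, namely a point with $x_1,\dots,x_h$ all nonzero and $x_i+x_j\ne 0$ for all $1\le i<j\le h$. Such a point lies outside $D_3\cup\{\overline{0}\}$, forcing $\Lambda<q^{k-1}$ and $w(c_H)>n-q^{k-1}+1$.

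First I would reduce to hyperplanes supported on the first $h$ variables. If $H:\sum_l c_l x_l=0$ has $c_m\ne 0$ for some $m>h$, then setting $x_1=\dots=x_h=1$ (good, since $p>2$ gives $1\ne 0$ and $1+1\ne 0$), all remaining free coordinates to $0$, and solving the equation for $x_m$, yields a good point. So I may assume $c_l=0$ for every $l>h$ and argue by the size $t$ of the support $S\subseteq\{1,\dots,h\}$. For $t=1$ the hyperplane is $x_i=0$, one of the special ones; for $t=2$ it is $c_i x_i+c_j x_j=0$, which is special exactly when $c_i=c_j$ (that is, $x_i+x_j=0$), while if $c_i\ne c_j$ the point with $x_j=1$, $x_i=-c_j/c_i$, and all other coordinates equal to $1$ is good, since then $x_i\ne 0$ and $x_i+x_j=1-c_j/c_i\ne 0$.

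The core case is $t\ge 3$, which I would treat by solving the equation for one variable and splitting according to whether the coefficients of $S$ are all equal. If they are not, I can isolate a variable $x_l$ whose normalized coefficient $\beta$ satisfies $\beta\ne 0,-1$, write $x_i=\beta x_l+\sum_{j\in J}\alpha_j x_j$ with $\#J\ge 1$, and set every coordinate except $x_l$ equal to $1$, leaving $x_l=\lambda$ free. Requiring $x_l$ and $x_i$ nonzero and no pair among $x_i,x_l,1$ summing to zero forbids $\lambda$ only the five values $0,\,-1,\,-\tfrac{\sigma}{\beta},\,-\tfrac{\sigma}{1+\beta},\,-\tfrac{1+\sigma}{\beta}$, where $\sigma=\sum_{j\in J}\alpha_j$, so $q>5$ provides a valid $\lambda$. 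If instead all coefficients are equal, then $H:\sum_{l\in S}x_l=0$; here I would fix two support variables $x_l=\lambda$, $x_s=\mu$, set all other coordinates to $1$, and let the remaining support variable be determined. The nonzero and pairwise conditions then exclude at most three values of $\lambda$ and, once $\lambda$ is chosen, at most six values of $\mu$, so again $q>5$ (whence $q\ge 7$, as $q$ is an odd prime power) guarantees a good point.

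The hard part is not conceptual but the bookkeeping: in each case one must list all constraints $x_i\ne 0$ and $x_i+x_j\ne 0$ coming from the relevant coordinates, confirm that the exhibited point genuinely lies on $H$, and count the forbidden parameter values to check that $q>5$ is exactly what is needed. I would pay particular attention to the degenerate corners — for instance $t=h$ (no coordinate is free to be set to $1$) or small $\#J$ — where several listed constraints become vacuous; in every such situation the number of forbidden values only drops, so the same choices still work. This is also where the threshold originates: the worst case is the all-equal-coefficient configuration, in which $\mu$ must avoid six values, making $q>5$ tight.
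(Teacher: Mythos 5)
Your proposal is correct and follows essentially the same route as the paper: the special hyperplanes are shown to contain $q^{k-1}-1$ points of $D_3$, and every other hyperplane is handled by exhibiting a point with all of $x_1,\dots,x_h$ nonzero and no pair summing to zero, with the same case split (support size $1$, $2$, and $\geq 3$ divided into not-all-equal versus all-equal coefficients) and the same counts of forbidden parameter values ($5$ for $\lambda$ in the mixed-coefficient case; $3$ for $\lambda$ and $6$ for $\mu$ in the all-equal case), which is exactly where $q>5$ enters. Your explicit preliminary reduction to hyperplanes supported on the first $h$ variables is a minor tidying of a step the paper leaves implicit.
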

\proof
Clearly, for $1\leq i<j\leq h$, any hyperplane $x_i+x_j=0$ or $x_i=0$ contains $q^{k-1}-1$ points of $\mathcal{D}_{3}$ and hence such hyperplanes correspond to minimum weight codewords. We will show that if $H$ is an hyperplane (through the origin) different from $x_i+x_j=0$, $1\leq i<j\leq h$, and $x_i=0$, $1\leq i\leq h$, then there exists a point $P\in H\setminus \mathcal{D}_3$. We argue as in the proof of Proposition \ref{pesi:family2}, the only difference being that $P$ must not have zero coordinates.
\begin{itemize}
    \item If $H : x_i+\alpha x_j=0$ for some $i\neq j$ and $\alpha \neq 0,1$ then the point $$(1,\ldots,1,\underbrace{-\alpha}_i,1,\ldots,1)\in H\setminus \left(\mathcal{D}_3 \cup \{0\}\right)$$ and therefore $w(c_H)>n-q^{k-1}+1$, where $n$ is the length of $\mathrm{C}_{D3}$. 
    \item Suppose now that $H: x_i=\beta x_l+\sum_{j\in J}\alpha_j x_j$, with $\#J\geq 1$, $\beta\neq 0,-1$, $l\notin J$. Let $\lambda \in \mathbb{F}_q\setminus \{0,-1,-(\sum_{j\in J}\alpha_j)/(\beta+1),-(\sum_{j\in J}\alpha_j)/\beta,-(1+\sum_{j\in J}\alpha_j)/\beta\}$. Then the point $$(1,\ldots,1,\underbrace{\lambda}_l,1,\ldots,1,\underbrace{\lambda\beta+\sum_{j\in J}\alpha_j}_i,1,\ldots,1)\in H\setminus \left(\mathcal{D}_3 \cup \{0\}\right)$$
    therefore $w(c_H)>n-q^{k-1}+1$. 
    \item Consider now the case $H: x_i=-x_l-x_s-\sum_{j\in J}x_j$, with $\#J\geq 0$. Let $\lambda \in \mathbb{F}_q\setminus \{0,-1,-\#J\}$, $\mu \in \mathbb{F}_q \setminus \{0,-1,-\lambda,-\#J,-\lambda-\#J,1-\lambda-\#J\}$.  Then the point $$(1,\ldots,1,\underbrace{\lambda}_l,1,\ldots,1,\underbrace{\mu}_s, 1,\ldots,1,\underbrace{-\lambda -\mu -\#J}_i,1,\ldots,1)\in H\setminus \left(\mathcal{D}_3 \cup \{0\}\right)
    $$
    therefore $w(c_H)>n-q^{k-1}+1$.
\end{itemize}

\endproof

\section{Family 4}
In this Section we deal with codes $\mathrm{C}_{\widetilde{[D, D]}}$ as defined in Theorem \ref{codicitilde}. Note that if $D$ is a subset of $\textrm{AG}(k,q)$ such that $aD=D$ for every $a \in \mathbb{F}_q^*$ and $\# D=n$, then
$$D=\mathbb{F}_q^* P_1 \cup \mathbb{F}_q^* P_2\cup \cdots \cup \mathbb{F}_q^* P_{n/(q-1)},$$ for some $P_1,\ldots,P_{n/(q-1)}\in D$.
Also observe that the weight of any codeword of $\mathrm{C}_{\widetilde{[D, D]}}$ is divisible by $q-1$.

Since the defining set of $\mathrm{C}_{\widetilde{[D, D]}}$ is 
$$\{(x_1,\ldots,x_k,0) : (x_1,\ldots,x_k) \in D\}\cup \{(x_1,\ldots,x_k,1) : (x_1,\ldots,x_k) \in D\}\subset \textrm{AG}(k+1,q),$$ it follows that for any hyperplane $H\subset \textrm{AG}(k+1,q)$ through the origin, the corresponding codeword   $c_H\in \mathrm{C}_{\widetilde{[D, D]}}$ can be written as
$(c_{H,0},c_{H,1})$
where 
$$c_{H,0}=(H(x_1,\ldots,x_k,0))_{x\in (\F_{q}^k)^*} \qquad \textrm{ and } \qquad c_{H,1}=(H(x_1,\ldots,x_k,1))_{x\in (\F_{q}^k)^*}.$$
Clearly,  $\mathrm{w}(c_H)=\mathrm{w}(c_{H,0})+\mathrm{w}(c_{H,1})$.

\begin{proposition}
\label{prop:dist_codicitilde}
For $H: \alpha_1 x_1+\cdots +\alpha_k x_k+\alpha_{k+1}x_{k+1}=0$, let $ \widetilde{H}: \alpha_1 x_1+\cdots +\alpha_k x_k=0$. If $c_{H}\in \mathrm{C}_{\widetilde{[D, D]}}$ and  $c_{\widetilde{H}}\in \mathrm{C}_D$ are the codewords corresponding to $H$ and $\widetilde{H}$ respectively, then
\begin{itemize}
    \item[(i)] If $\alpha_{k+1}=0$ then $\mathrm{w}(c_{H,1})=\mathrm{w}(c_{H,0})$ and  $\mathrm{w}(c_{H})=2\mathrm{w}(c_{\widetilde{H}})$.
    \item[(ii)] If $\alpha_{k+1}\neq 0$ then    $\mathrm{w}(c_{H})=n+\frac{q-2}{q-1}\mathrm{w}(c_{ \widetilde{H}})$.
\end{itemize}
\end{proposition}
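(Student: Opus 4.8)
The plan is to exploit the decomposition $\mathrm{w}(c_H)=\mathrm{w}(c_{H,0})+\mathrm{w}(c_{H,1})$ recorded just before the statement, together with the elementary observation that the first half of the defining set consists of points whose last coordinate is $0$. Concretely, I would first remark that $c_{H,0}=(H(x_1,\ldots,x_k,0))_{x\in D}=(\widetilde{H}(x))_{x\in D}=c_{\widetilde{H}}$ identically, since the term $\alpha_{k+1}\cdot 0$ drops out; hence $\mathrm{w}(c_{H,0})=\mathrm{w}(c_{\widetilde{H}})$ in both cases, while $c_{H,1}=(\widetilde{H}(x)+\alpha_{k+1})_{x\in D}$.

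Part (i) is then immediate: when $\alpha_{k+1}=0$ the second half collapses to $c_{H,1}=(\widetilde{H}(x))_{x\in D}=c_{H,0}$, so $\mathrm{w}(c_{H,1})=\mathrm{w}(c_{H,0})$ and therefore $\mathrm{w}(c_H)=2\mathrm{w}(c_{\widetilde{H}})$.

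For part (ii) the content is a counting argument driven by the hypothesis $aD=D$ for every $a\in\mathbb{F}_q^*$. Writing $\mathrm{w}(c_{H,1})=n-\#\{x\in D:\widetilde{H}(x)=-\alpha_{k+1}\}$, I would decompose $D$ into its $\mathbb{F}_q^*$-orbits $\mathbb{F}_q^*P_1,\ldots,\mathbb{F}_q^*P_{n/(q-1)}$, exactly as recalled at the start of the section. On an orbit where $\widetilde{H}$ vanishes identically, no point contributes to $\mathrm{w}(c_{\widetilde{H}})$ and, since $\alpha_{k+1}\neq 0$, no point satisfies $\widetilde{H}(x)=-\alpha_{k+1}$. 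On an orbit $\mathbb{F}_q^*P_i$ with $\widetilde{H}(P_i)\neq 0$, the values $\widetilde{H}(aP_i)=a\,\widetilde{H}(P_i)$ run over all of $\mathbb{F}_q^*$ exactly once as $a$ ranges over $\mathbb{F}_q^*$; thus all $q-1$ points of the orbit contribute to $\mathrm{w}(c_{\widetilde{H}})$, while precisely one of them satisfies $\widetilde{H}(x)=-\alpha_{k+1}$. Letting $N$ be the number of orbits on which $\widetilde{H}$ does not vanish, I obtain $\mathrm{w}(c_{\widetilde{H}})=(q-1)N$ and $\#\{x\in D:\widetilde{H}(x)=-\alpha_{k+1}\}=N=\mathrm{w}(c_{\widetilde{H}})/(q-1)$.

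Combining these gives $\mathrm{w}(c_{H,1})=n-\mathrm{w}(c_{\widetilde{H}})/(q-1)$, whence $\mathrm{w}(c_H)=\mathrm{w}(c_{\widetilde{H}})+n-\mathrm{w}(c_{\widetilde{H}})/(q-1)=n+\frac{q-2}{q-1}\mathrm{w}(c_{\widetilde{H}})$, as claimed. The only step requiring genuine care is the orbit count in part (ii): one must check that on an orbit where $\widetilde{H}$ is nonzero the value $-\alpha_{k+1}$ is attained exactly once, which is precisely where the $\mathbb{F}_q^*$-invariance of $D$ is indispensable. Without that invariance the affine fibre $\{x:\widetilde{H}(x)=-\alpha_{k+1}\}$ need not meet $D$ in a number of points controlled by $\mathrm{w}(c_{\widetilde{H}})$, and the clean relation between the two weights would break down.
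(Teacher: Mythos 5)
Your proof is correct and follows essentially the same route as the paper: both arguments decompose $D$ into $\mathbb{F}_q^*$-orbits and observe that on an orbit where $\widetilde{H}$ is nonzero exactly one point satisfies $\widetilde{H}(x)=-\alpha_{k+1}$, yielding $\mathrm{w}(c_{H,1})=n-\mathrm{w}(c_{\widetilde{H}})/(q-1)$. Your write-up is in fact slightly more explicit than the paper's (spelling out that $a\mapsto a\,\widetilde{H}(P_i)$ is a bijection of $\mathbb{F}_q^*$), but the underlying idea is identical.
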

\proof
Point {(i)} is clear.

Suppose now that $\alpha_{k+1}\neq 0$.  From the assumptions on $D$, for each $P=(x_1,\ldots,x_k)\in D$ and $a\in\F_q^*$, the point $aP$ is in $D$.
We distinguish two cases:
\begin{enumerate}
    \item[(a)] If $\widetilde H(P)=0$ then $\widetilde{H}(Q)=0$ for any $Q \in \mathbb{F}_q^* P$. In this case the entries corresponding to $\mathbb{F}_q^* P$ in $c_{H,0}$ are $0$, whereas those in $c_{H,1}$ are nonzero.
    \item[(b)] If $\widetilde H(P)\neq 0$ then  there  exists a unique value  $a\in\mathbb{F}_q^*$ such that $H((ax_1,\ldots,ax_k,1))=\widetilde{H}(aP)+\alpha_{k+1}=0$. In this case no entry corresponding to $\mathbb{F}_q^* P$ in $c_{H,0}$ is $0$, whereas exactly one in $c_{H,1}$ vanishes.
\end{enumerate}
Therefore 
\[
\mathrm{w}(c_{H,1})=n-\frac{\mathrm{w}(c_{ \widetilde{H}})}{q-1}
\]
and 
$$\mathrm{w}(c_{H,1})+\mathrm{w}(c_{H,0})=n-\frac{\mathrm{w}(c_{ \widetilde{H}})}{q-1}+\mathrm{w}(c_{ \widetilde{H}})=n+\frac{(q-2)\mathrm{w}(c_{ \widetilde{H}})}{q-1}.$$
\endproof

Proposition \ref{prop:dist_codicitilde} shows that the weight distribution of  $\mathrm{C}_{\widetilde{[D, {D}]}}$ is uniquely determined by the weight distribution of $\mathrm{C}_D$.
As a corollary of Proposition \ref{prop:dist_codicitilde} the following holds.

\begin{corollary}
\label{Prop:dist_pesi}
Let $A_i$ be the number of codewords of weight $i$ in $\mathrm{C}_D$. Then the weight spectrum of $\mathrm{C}_{\widetilde{[D, {D}]}}$ is
\[
\bigcup_{i=1}^n\left\{2i,n+\frac{q-2}{q-1}i : \, A_i\ne0\right\}.
\]
Moreover, if $B_i$ denotes the number of codewords of weight $i$ in $\mathrm{C}_{\widetilde{[D, {D}]}}$, then
\begin{equation}\label{B_i}
B_{i}=\eta_{\frac{i}{2}}A_{\frac{i}{2}}+(q-1)\eta_{(i-n)\frac{q-1}{q-2}}A_{(i-n)\frac{q-1}{q-2}}
\end{equation}
where
\[
\eta_s=\begin{cases}
     1,\quad\text{if}\, s\in\mathbb{Z};\\
     0,\quad\text{otherwise.}
\end{cases}
\]
\end{corollary}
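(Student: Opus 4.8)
The plan is to use the bijection between linear forms and codewords together with Proposition~\ref{prop:dist_codicitilde}, which already records the weight of every codeword of $\mathrm{C}_{\widetilde{[D,D]}}$ in terms of the weight of its truncation in $\mathrm{C}_D$. Since $\mathrm{C}_D$ and $\mathrm{C}_{\widetilde{[D,D]}}$ have dimensions $k$ and $k+1$ by Theorem~\ref{codicitilde}, the evaluation maps $(\alpha_1,\ldots,\alpha_k)\mapsto c_{\widetilde H}$ and $(\alpha_1,\ldots,\alpha_{k+1})\mapsto c_H$ are injective, hence bijective onto the respective codes. Consequently $A_i$ is exactly the number of tuples $(\alpha_1,\ldots,\alpha_k)$ with $\mathrm{w}(c_{\widetilde H})=i$, and I may count codewords of $\mathrm{C}_{\widetilde{[D,D]}}$ by counting the forms $H$ that define them.

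First I would group the $q^{k+1}$ forms $H$ according to their truncation $\widetilde H$: over each fixed $(\alpha_1,\ldots,\alpha_k)$ there lie exactly $q$ forms, the single one with $\alpha_{k+1}=0$ and the $q-1$ with $\alpha_{k+1}\in\mathbb{F}_q^*$. By Proposition~\ref{prop:dist_codicitilde}(i) the form with $\alpha_{k+1}=0$ gives a codeword of weight $2\,\mathrm{w}(c_{\widetilde H})$, while by part~(ii) \emph{each} of the $q-1$ forms with $\alpha_{k+1}\neq0$ gives a codeword of the \emph{same} weight $n+\frac{q-2}{q-1}\mathrm{w}(c_{\widetilde H})$, the value being independent of $\alpha_{k+1}$. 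Letting $\mathrm{w}(c_{\widetilde H})=i$ run over the weights with $A_i\neq0$ yields the weight spectrum as the union of the two families $\{2i\}$ and $\{n+\frac{q-2}{q-1}i\}$.

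For the distribution I would invert these two relations. A codeword of weight $i$ comes from the $\alpha_{k+1}=0$ branch precisely when $i=2j$, i.e.\ $j=i/2$; this is possible only if $i/2\in\mathbb{Z}$, and then it accounts for the $A_{i/2}$ forms of that weight. It comes from the $\alpha_{k+1}\neq0$ branch precisely when $i=n+\frac{q-2}{q-1}j$, i.e.\ $j=(i-n)\frac{q-1}{q-2}$; this requires that index to be an integer, and each such form of $\mathrm{C}_D$ lies below $q-1$ forms of $\mathrm{C}_{\widetilde{[D,D]}}$, contributing $(q-1)A_{(i-n)(q-1)/(q-2)}$. Writing the integrality requirements through the indicators $\eta_{\cdot}$ and adding the two contributions gives precisely~\eqref{B_i}.

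The one point that needs care is the disjointness of the two branches. Even if the numbers $2j$ and $n+\frac{q-2}{q-1}j'$ coincide for some pair $j,j'$, the underlying codewords of $\mathrm{C}_{\widetilde{[D,D]}}$ remain distinct, being separated by the value of $\alpha_{k+1}$; hence the two terms of~\eqref{B_i} are genuinely additive and no identification is performed. I would also note that the zero form of $\mathrm{C}_D$ is handled uniformly by the $j=0$ term: its $q-1$ extensions with $\alpha_{k+1}\neq0$ each have weight $n$, matching $(q-1)\eta_0 A_0$ in the formula.
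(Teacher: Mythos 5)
Your argument is correct and is exactly the derivation the paper intends: the corollary is stated as an immediate consequence of Proposition~\ref{prop:dist_codicitilde}, and your counting of the $q$ forms lying over each truncation $\widetilde H$ (one with $\alpha_{k+1}=0$ contributing to the $2i$ branch, $q-1$ with $\alpha_{k+1}\neq0$ contributing to the $n+\frac{q-2}{q-1}i$ branch) is the content that the paper leaves implicit. Your added remarks on the injectivity of the evaluation maps, the additivity of the two terms even under weight collisions, and the $j=0$ term accounting for the weight-$n$ codewords are all accurate and, if anything, more careful than the paper's wording.
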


In what follows we will focus on the computation of the weight distribution of the code $\mathrm{C}_{\widetilde{[D_4, {D_4}]}}$, where $D_4 = \left\{(x_1, \ldots , x_k) \in AG(k,q)\setminus \{\overline{0}\} : \prod_{i=1}^h x_i =0 \right\}$.
First, we report some information on $\mathrm{C}_{D_4}$ proved in \cite[Theorem 23]{TQLZ}.

\begin{proposition}\label{cd4}
$\mathrm{C}_{D_4}$ is a $\left[n,\,k\, ,n-q^{k-1}+1\right]_q$-code, where $n=q^{k-h}(q^h-(q-1)^h)-1$. Moreover,
\begin{itemize}
\item $\mathrm{C}_{D_4}$ has weight spectrum 
 $$\left\{n-q^{k-1}+q^{k-h-1}(q-1)^h+1,n-q^{k-1}+q^{k-h}(q-1)^{h-s}\psi_{s}+1\right\},$$ where $s=1,\ldots,h$.
\item If $A_i$ denotes the number of codewords of $\mathrm{C}_{D_4}$ of weight $i$, then 
\begin{equation*}
A_i=\left\{\begin{array}{ll}
  q^k-q^h,& \textrm{ if } i=n-q^{k-1}+q^{k-h-1}(q-1)^h+1;\\
  \binom{h}{s}(q-1)^{s},& \textrm{ if } i=n-q^{k-1}+q^{k-h}(q-1)^{h-s}\psi_{s}+1.\\
\end{array}
\right.
\end{equation*}
\end{itemize}
\end{proposition}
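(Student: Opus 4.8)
The plan is to compute, for each hyperplane $H$ through the origin, the weight of the associated codeword by a direct counting argument, and then to organize the hyperplanes into classes according to the support of their coefficient vector. First I would record the length and dimension. Of the $q^k$ vectors in $AG(k,q)$ exactly $(q-1)^h q^{k-h}$ have all of $x_1,\ldots,x_h$ nonzero, so $q^{k-h}(q^h-(q-1)^h)$ satisfy $\prod_{i=1}^h x_i=0$; removing $\overline{0}$ gives $n=q^{k-h}(q^h-(q-1)^h)-1$. The dimension is $k$ because $D_4$ contains the full standard basis $e_1,\ldots,e_k$ (each $e_i$ kills the product $\prod_{i=1}^h x_i$), so the columns of the generator matrix span $\mathbb{F}_q^k$; this is the content of the cited \cite[Theorem 23]{TQLZ}.

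The heart of the argument is to express the weight of $c_H$ through a complementary count. For $H:\alpha_1x_1+\cdots+\alpha_kx_k=0$, the weight of $c_H$ is $n$ minus the number of points of $D_4$ lying on $H$. Since $\overline{0}$ lies on every $H$ but is excluded from $D_4$, that number equals $M(H)-1$, where $M(H)$ counts all solutions in $AG(k,q)$ of the system $\{H=0,\ \prod_{i=1}^h x_i=0\}$. Writing $M(H)=q^{k-1}-T(H)$, where $\#H=q^{k-1}$ and $T(H)$ is the number of points on $H$ with $x_1,\ldots,x_h$ all nonzero, I obtain $\mathrm{w}(c_H)=n-q^{k-1}+T(H)+1$, so everything reduces to computing $T(H)$.

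Next I would split into two cases on the coefficient vector. If some $\alpha_j\neq 0$ with $j>h$, that coordinate can be solved for in terms of the others: the $h$ nonzero coordinates $x_1,\ldots,x_h$ and the remaining $k-h-1$ free coordinates range freely, giving $T(H)=q^{k-h-1}(q-1)^h$ and the generic weight, realized by the $q^k-q^h$ hyperplanes with $(\alpha_{h+1},\ldots,\alpha_k)\neq\overline{0}$. If instead $\alpha_{h+1}=\cdots=\alpha_k=0$ and exactly $s$ of $\alpha_1,\ldots,\alpha_h$ are nonzero ($1\le s\le h$), then the $k-h$ last coordinates are free ($q^{k-h}$ choices), the $h-s$ uninvolved nonzero coordinates among the first $h$ contribute $(q-1)^{h-s}$, and the $s$ involved coordinates must satisfy a weighted equation summing to zero with all entries nonzero, whose solution count is $\psi_s$ by the definition of $\psi_s$ and its coefficient-independence recorded above. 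This yields $T(H)=q^{k-h}(q-1)^{h-s}\psi_s$ and the family of weights indexed by $s$; counting coefficient vectors gives $\binom{h}{s}(q-1)^s$ such hyperplanes. Since distinct coefficient vectors give distinct codewords (dimension $k$), these counts are exactly the $A_i$, and substituting the two values of $T(H)$ into $\mathrm{w}(c_H)=n-q^{k-1}+T(H)+1$ produces the claimed weight spectrum.

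Finally I would observe that $\psi_1=0$, so the case $s=1$ (the hyperplanes $x_i=0$, $1\le i\le h$) produces weight $n-q^{k-1}+1$; since $T(H)\ge 0$ always and $\psi_s>0$ for $s\ge 2$, this is the smallest weight and hence the minimum distance. The only real subtleties are the bookkeeping around $\overline{0}$ (present on every $H$ but absent from $D_4$) and the reduction of the case-(ii) count to $\psi_s$; the latter is precisely where the coefficient-independence of the count $\#\{\sum a_i x_i=0:\ x_i\neq 0\}=\psi_s$ for arbitrary $a_i\in\mathbb{F}_q^*$ is indispensable.
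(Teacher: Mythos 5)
Your proof is correct. Note, however, that the paper does not actually prove Proposition \ref{cd4}: it is quoted verbatim from \cite[Theorem 23]{TQLZ}, so there is no in-paper argument to compare against. What you have written is a complete, self-contained derivation, and it is exactly in the spirit of the paper's own treatment of Family 1 (Propositions \ref{Prop:r>=1} and \ref{Prop:r=0}), specialized to the simpler defining polynomial $\prod_{i=1}^h x_i$: the reduction $\mathrm{w}(c_H)=n-q^{k-1}+T(H)+1$, the case split on whether $(\alpha_{h+1},\ldots,\alpha_k)$ vanishes, and the use of the coefficient-independence of $\psi_s$ are all the same devices. Your bookkeeping is sound throughout, including the handling of $\overline{0}$, the observation that $\psi_1=0$ identifies the minimum-weight hyperplanes $x_i=0$, and the fact that your count $q^k-q^h$ correctly degenerates to $0$ when $h=k$ (matching the paper's remark that $A_w=0$ in that case). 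The only caveat, which does not affect correctness, is that the displayed $A_i$ formula tacitly assumes the listed weights are pairwise distinct; the paper itself lives with this ambiguity here and only resolves collisions later for $\mathrm{C}_{\widetilde{[D_4,D_4]}}$.
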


As a notation, let
\begin{eqnarray*}
w_s&=&n-q^{k-1}+q^{k-h}(q-1)^{h-s}\psi_{s}+1,\\
w&=&n-q^{k-1}+q^{k-h-1}(q-1)^h+1.
\end{eqnarray*}

Note that if $h=k$, $A_w=0$.

We are now in position to address \cite[Open Problem 48]{TQLZ}, providing the parameters and the weight distribution of the code $\mathrm{C}_{\widetilde{[D_4, {D_4}]}}$.

\begin{proposition}\label{Prop:Family4}
$\mathrm{C}_{\widetilde{[D_4, {D_4}]}}$ is a $\left[2n,k+1,n\right]_q$-code, where $n$ is the lenght of $\mathrm{C}_{D_4}$. Moreover, the weight spectrum of $\mathrm{C}_{\widetilde{[D_4, {D_4}]}}$ is 
\begin{itemize}
\item for $k>h$ \[
    \left\{0,n,2w_s,n+w_s\frac{q-2}{q-1},2w,n+w\frac{q-2}{q-1}\right\}_{s=1,\dots,h};
    \]
    
\item for $k=h$ 
\[\left\{0,n,2w_s,n+w_s\frac{q-2}{q-1}\right\}_{s=1,\dots,h}.
    \]
\end{itemize}
\end{proposition}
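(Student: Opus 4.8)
The plan is to recognise $\mathrm{C}_{\widetilde{[D_4, D_4]}}$ as the special case $D_1=D_2=D_4$ of the general construction in Theorem \ref{codicitilde}, and then to transport the weight data of $\mathrm{C}_{D_4}$ (Proposition \ref{cd4}) through Proposition \ref{prop:dist_codicitilde} and Corollary \ref{Prop:dist_pesi}. First I would check that $D_4$ meets the hypothesis $a\cdot D_4=D_4$ for every $a\in\mathbb{F}_q^*$: this is immediate since $\prod_{i=1}^h(ax_i)=a^h\prod_{i=1}^h x_i$ vanishes exactly when $\prod_{i=1}^h x_i$ does. Hence Theorem \ref{codicitilde} applies and shows directly that $\mathrm{C}_{\widetilde{[D_4, D_4]}}$ is a minimal code of length $\#D_4+\#D_4=2n$ and dimension $k+1$, which already settles two of the three claimed parameters.

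For the weight spectrum I would feed the weights of $\mathrm{C}_{D_4}$ listed in Proposition \ref{cd4}, namely $w_s$ for $s=1,\dots,h$ together with $w$ (the latter appearing only when $k>h$, since $A_w=0$ for $k=h$), into Corollary \ref{Prop:dist_pesi}. By Proposition \ref{prop:dist_codicitilde} every nonzero weight $i$ of $\mathrm{C}_{D_4}$ contributes the two values $2i$ (from hyperplanes with $\alpha_{k+1}=0$) and $n+\frac{q-2}{q-1}i$ (from hyperplanes with $\alpha_{k+1}\neq 0$). I would then separately account for the two hyperplanes not attached to a nonzero codeword of $\mathrm{C}_{D_4}$: the totally trivial form yields weight $0$, and a form with $\widetilde H=0$ but $\alpha_{k+1}\neq 0$ gives $c_{H,0}=0$ and $c_{H,1}$ with all $n$ entries equal to $\alpha_{k+1}\neq 0$, hence weight $n+\frac{q-2}{q-1}\cdot 0=n$. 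Collecting these contributions reproduces exactly the two displayed sets, the $k=h$ case being obtained by deleting the pair $2w,\ n+\frac{q-2}{q-1}w$ coming from $w$.

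It remains to identify the minimum distance with $n$. Because $q\geq 2$ and $w_s,w>0$, all weights of the form $n+\frac{q-2}{q-1}w_s$ and $n+\frac{q-2}{q-1}w$ are at least $n$, and the value $n$ is attained by the construction above; so the only comparison left is between $n$ and the doubled weights $2w_s$ and $2w$. Since $w_1=n-q^{k-1}+1$ is the minimum weight of $\mathrm{C}_{D_4}$ by Proposition \ref{cd4}, every $2w_s$ and $2w$ is bounded below by $2w_1$, and the decisive inequality $2w_1\geq n$ rewrites, using $n=q^{k-h}(q^h-(q-1)^h)-1$, as
\[
q^{k-1}(q-2)+1\geq q^{k-h}(q-1)^h .
\]
I expect this elementary bound to be the main point of the whole proof and essentially its only computational step. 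After dividing by $q^{k-h}$ it reduces to a comparison of the type $q^{h-1}(q-2)\geq(q-1)^h$, equivalently $\frac{q-2}{q-1}\geq\bigl(\frac{q-1}{q}\bigr)^{h-1}$, which I would verify to hold over the relevant parameter range $3\leq h\leq k$. Once this inequality is secured, every nonzero weight is at least $n$ while $n$ itself occurs, so the minimum distance is $n$ and the proof is complete.
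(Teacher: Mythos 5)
Your proposal is correct and follows essentially the same route as the paper: the length and dimension come from Theorem \ref{codicitilde}, the weight spectrum is read off from Propositions \ref{prop:dist_codicitilde} and \ref{cd4} (with $w$ dropped when $k=h$ because $A_w=0$), and the minimum distance reduces to checking $2w_1=2(n-q^{k-1}+1)\ge n$, which the paper merely asserts inside a $\min$ while you make explicit as $q^{k-1}(q-2)+1\ge q^{k-h}(q-1)^h$. The only caveat, which affects the paper's own proof equally since the proposition states no hypothesis on $q$, is that this inequality (and hence the claim $d=n$) fails for $q=2$ when $k>h$, so the verification you defer to the end only goes through for $q\ge 3$.
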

\proof
The claim on the weight spectrum is a consequence of Propositions \ref{prop:dist_codicitilde} and \ref{cd4}.
We only need to prove that the minimum weight of $\mathrm{C}_{\widetilde{[D_4, {D_4}]}}$ equals $n$. 
By Proposition \ref{prop:dist_codicitilde}, the only candidates as minimum weights are those arising from the minimum weight codewords in $\mathrm{C}_{D_4}$ and from the null word of $\mathrm{C}_{D_4}$ . Therefore, the minimum distance of  $\mathrm{C}_{\widetilde{[D_4, {D_4}]}}$ is
\[
\min\bigg(n,2(n-q^{k-1}+1),(n-q^{k-1}+1)\frac{q-2}{q-1}\bigg)=n.
\]
\endproof

Clearly there may be collisions between two weights in the weight spectrum of Proposition \ref{Prop:Family4}. In the next proposition we provide a deeper analysis of the weight distribution of $\mathrm{C}_{\widetilde{[D_4, {D_4}]}}$.

\begin{proposition}
\label{Prop:dist_pesi_D4}
Let $B_i$ be the number of codewords of $\mathrm{C}_{\widetilde{[D_4, D_4]}}$ of weight $i$. If $q>3$ the weight distribution of $\mathrm{C}_{\widetilde{[D_4, D_4]}}$ is given in Table \ref{Table_q>3}.
\begin{table}[h]
\caption{Weight Distribution of $\mathrm{C}_{\widetilde{[D_4, D_4]}}$ for $q>3$}\label{Table_q>3}
\begin{center}
\begin{tabular}{|c|c|}
\hline
Weight $i$ & $B_i$\\ \hline 
$0$ & $1$ \\ \hline 
$n$ & $q-1$ \\ \hline
$2w_s$, for $s=1,\ldots,h$ & $\binom{h}{s}(q-1)^s$ \\ \hline
$n+w_s\frac{q-2}{q-1}$, for $s=1,\ldots,h$ & $\binom{h}{s}(q-1)^{s+1}$ \\ \hline
$2w$ & $q^k-q^h$ \\ \hline
$n+w\frac{q-2}{q-1}$ & $(q^k-q^h)(q-1)$ \\ \hline
\end{tabular}
\end{center}
\end{table}
\end{proposition}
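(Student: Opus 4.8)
The plan is to transfer the known weight distribution of $\mathrm{C}_{D_4}$, recorded in Proposition \ref{cd4}, to $\mathrm{C}_{\widetilde{[D_4,D_4]}}$ by means of Proposition \ref{prop:dist_codicitilde} (equivalently Corollary \ref{Prop:dist_pesi}), and then to prove that the resulting weights are pairwise distinct when $q>3$, so that no two rows of Table \ref{Table_q>3} need to be merged.

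First I would set up the bookkeeping. Every codeword of $\mathrm{C}_{\widetilde{[D_4,D_4]}}$ comes from a unique vector $(\alpha_1,\dots,\alpha_{k+1})$, hence from a codeword $c_{\widetilde H}$ of $\mathrm{C}_{D_4}$ together with a choice of $\alpha_{k+1}$. By Proposition \ref{prop:dist_codicitilde}, a weight-$u$ codeword of $\mathrm{C}_{D_4}$ yields exactly one codeword of weight $2u$ (the lift with $\alpha_{k+1}=0$) and $q-1$ distinct codewords of weight $n+\tfrac{q-2}{q-1}u$ (the lifts with $\alpha_{k+1}\neq0$). Feeding in the multiplicities from Proposition \ref{cd4} --- weight $w$ with multiplicity $q^k-q^h$, weight $w_s$ with multiplicity $\binom{h}{s}(q-1)^s$, and the zero word with multiplicity $1$ --- produces precisely the six entries of the table: the zero word contributes weight $0$ (once) and weight $n$ ($q-1$ times), while the nonzero weights $w_s,w$ contribute $2w_s,2w$ and $n+\tfrac{q-2}{q-1}w_s,n+\tfrac{q-2}{q-1}w$ with the stated multiplicities. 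As a consistency check the multiplicities sum to $q^{k+1}$, matching the dimension $k+1$.

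The substance of the proof is the distinctness of these weights for $q>3$. A short computation gives the two identities
\[
w=\frac{(q-1)(n+1)}{q}, \qquad w_s-w=(-1)^s q^{k-h-1}(q-1)^{h-s+1}.
\]
From the second, the numbers $w_1,\dots,w_h,w$ are pairwise distinct: their mutual differences have the strictly decreasing, hence pairwise different and nonzero, absolute values $q^{k-h-1}(q-1)^{h-s+1}$, using $q-1>1$. Since $\tfrac{q-2}{q-1}>0$, this at once separates the doubled weights among themselves and the shifted weights among themselves. The first identity yields the key relation $2w=\bigl(n+\tfrac{q-2}{q-1}w\bigr)+1$, so for any two of the listed base weights $u,v$,
\[
2u-\Bigl(n+\tfrac{q-2}{q-1}v\Bigr)=1+2(u-w)-\tfrac{q-2}{q-1}(v-w).
\]

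The main obstacle is ruling out cross-collisions between a doubled weight and a shifted weight, i.e. showing the right-hand side above is never $0$. Writing $u-w$ and $v-w$ in the form given by the second identity, both $2(u-w)$ and $\tfrac{q-2}{q-1}(v-w)$ are integer multiples of $q^{k-h-1}$, so the displayed difference equals $1+q^{k-h-1}a$ with $a\in\mathbb Z$. When $k\geq h+2$ this is decisive: if $a=0$ the value is $1$, and if $a\neq0$ its absolute value is at least $q^{k-h-1}-1\geq q-1\geq3$, so in either case it is nonzero. The boundary cases $k=h+1$ (where $q^{k-h-1}=1$) and $k=h$ (where the weight $w$ is absent, cf. the remark $A_w=0$) are not covered by this size estimate and must be settled by a direct finite analysis; it is exactly here that the hypothesis $q>3$ is used --- a residue computation modulo $q-1$ together with a magnitude comparison reduces a hypothetical collision either to the divisibility $q-1\mid2$ or to an equality $(q-1)^m=(q-1)/2$ with $m\geq1$, both of which are impossible once $q>3$. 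The same circle of ideas (the unit-gap identity together with the $q^{k-h-1}$-divisibility of the perturbations) disposes of the collisions $2u=n$ arising from the zero word. Assembling these disjointness statements with the multiplicity count of the second paragraph gives exactly Table \ref{Table_q>3}.
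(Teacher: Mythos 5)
Your reduction to $\mathrm{C}_{D_4}$ via Proposition \ref{prop:dist_codicitilde}, the multiplicity bookkeeping, and the two identities $w=\frac{(q-1)(n+1)}{q}$ and $w_s-w=(-1)^sq^{k-h-1}(q-1)^{h-s+1}$ are all correct (I checked them), and they give a genuinely different and in places cleaner route than the paper's. The paper proceeds by an explicit enumeration of collision types, using that $q-1$ divides $w$ and $w_s$ but $(q-1)^2$ does not (except possibly $w_h$), together with congruences mod $q$ and direct magnitude comparisons. Your ``unit-gap'' identity $2w=n+\frac{q-2}{q-1}w+1$, combined with the observation that $2(u-w)$ and $\frac{q-2}{q-1}(v-w)$ are multiples of $q^{k-h-1}$, disposes of \emph{all} cross-collisions $2u=n+\frac{q-2}{q-1}v$ in one stroke whenever $k\ge h+2$, and it also kills $2w_s=n+\frac{q-2}{q-1}w$ for every $k$ (the difference is $1+2(w_s-w)$, an odd integer). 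That is a real simplification over the paper's argument.

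The genuine gap is that the cases $k=h$ and $k=h+1$ --- precisely the ones where $q^{k-h-1}\le 1$ and your size estimate gives nothing --- are not proved but only asserted to follow from ``a direct finite analysis.'' These are not peripheral: the paper's remark shows that for $q=3$, $k=h+1$ the collision $2w=n+\frac{q-2}{q-1}w_h$ actually occurs, so this is exactly where the hypothesis $q>3$ must enter, and it is where the paper expends most of its effort (the separate treatment of $w_h$ according to the parity of $h$, Equation \eqref{differenza}, and the contradiction with $n\equiv-1\bmod q$). Your one-sentence sketch (``reduces ... either to $q-1\mid 2$ or to $(q-1)^m=(q-1)/2$'') does not identify which collision types lead to which contradiction, nor does it verify that the residue-mod-$(q-1)$ computation actually covers every pair $(s,s')$ including $s'=h$ with both parities of $h$. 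The same deferral affects the collisions $2w_s=n$ and $2w=n$ for $k\in\{h,h+1\}$, which the paper settles by the explicit inequality $2w_s-n>q^k-q^{k-h}(q-1)^h-2q^{k-1}>0$ for $q>3$; ``the same circle of ideas'' is not a proof of this. To complete your argument you would need to carry out, for $k\le h+1$, the finite check that $1+2\epsilon(q-1)^{h-s+1}q^{k-h-1}-\epsilon'(q-2)(q-1)^{h-s'}q^{k-h-1}\neq 0$ for all admissible choices (including the degenerate ones coming from $u=w$, $v=w$, or the zero word), which is feasible along the lines you indicate but is currently missing.
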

\proof
 
The claim follows by Corollary \ref{Prop:dist_pesi} and Proposition \ref{Prop:Family4}, after proving that there are no collisions between two weights in the weight spectrum of $\mathrm{C}_{\widetilde{[D_4, D_4]}}$. First, observe that while $w$ and $w_s$ are divisible by $q-1$, they are not divisble by $(q-1)^2$ (possibly with the only  exception of $w_h$). Indeed,
    \begin{equation}\label{congruenza_w}
    \frac{w}{q-1}\equiv\frac{w_s}{q-1}\equiv\frac{q^k-1}{q-1}-\frac{q^{k-1}-1}{q-1}\equiv q^{k-1}\not\equiv0\mod q-1
    \end{equation}
    for $s=1,\dots,h-1$, while
    if $s=h$
    \begin{equation}\label{congruenza_w_1}
        \frac{w_h}{q-1}\equiv q^{k-h-1}(q^h+(-1)^h) \mod q-1.
    \end{equation}
    We now consider all the possibile cases of collision between two weights.
 \begin{itemize}
 \item If $k>h+1$ it is readily seen that $n\ne 2w_s$ and $n\ne 2w$, since $n\equiv -1 \mod q$ whereas $w,w_s \equiv 0 \mod q$.
 If $k=h+1$ or $k=h$, a direct computation shows that $n<\min\{2w,2w_s\}$. Indeed,
\begin{eqnarray*}
2w_s-n&=&n-2q^{k-1}+2q^{k-h}(q-1)^{h-s}\psi_s+2\\
 &>& n-2q^{k-1}+1=q^k-q^{k-h}(q-1)^{h}-2q^{k-1}>0,
\end{eqnarray*}
for $q>3$. The same argument yields $n<2w$.
 \item  $n=n+w\frac{q-2}{q-1}$ or $n=n+w_s\frac{q-2}{q-1}$ is impossible for $q>3$.
 \item $w_s=w$ (which yields $2w_s=2w$ and $n+w_s\frac{q-2}{q-1}=n+w\frac{q-2}{q-1}$) implies
    \[
    (q-1)^{h-s}q^{k-h}\psi_s=(q-1)^hq^{k-h-1}
    \]
    that is 
    \[
    q\psi_s=(q-1)^s
    \]
    a contradiction.
 
 \item As observed above, since $(q-1)$ divides $w$ and $w_s$ but $(q-1)^2$ does not (except possibly for $w_h$), we have 
 \[
    2w_s\ne n+\frac{q-2}{q-1}w
    \]
    for $s=1,\dots,h$, and
 \[
    2w\ne n+\frac{q-2}{q-1}w_{s}
    \]
    for $s=1,\dots,h-1$.
    
    It remains to check if it is possible that  $2w= n+\frac{q-2}{q-1}w_{h}$. Note first that if $h$ is odd then $\frac{w_h}{q-1} \not\equiv0\mod q-1$, whence the same argument as above applies and $2w\neq n+\frac{q-2}{q-1}w_{h}$. Assume now $h$ even. Then $2w= n+\frac{q-2}{q-1}w_{h}$ reads
    \[
    2(n-q^{k-1}+q^{k-h-1}(q-1)^h+1)=n+(q-2)(q^{k-h-1}(q^h-(q-1)^h)+q^{k-h-1}),
    \]
    that is
 \begin{equation}\label{differenza}
   n=q^{k}-q^{k-h}(q-1)^h+q^{k-h}-2q^{k-h-1}-2,
    \end{equation}
    a contradiction to $n\equiv -1 \mod q$.

\item If $w_s=w_{s^\prime}$ for some $s,s^{\prime}\in \{1,\ldots,h\}$ with  $s^\prime>s$, then 
\[ (-1)^{s^{\prime}}(q-1)^{h-s^{\prime}+1} = (-1)^{s}(q-1)^{h-s+1}
    \]
that is
\[
    (-1)^{s-s^\prime}(q-1)^{s^\prime-s}=1
    ,\]
a contradiction to $q>3$.
  
\item If $2w=n+w\frac{q-2}{q-1}$, then $(q-1)n=qw$; impossible since $n$ is not divisible by $q$. The same argument also shows that $2w_s\ne n+w_s\frac{q-2}{q-1}$.
\end{itemize}    
\endproof

\begin{remark}
If $q=3$, almost the same argument in Proposition \ref{Prop:dist_pesi_D4} applies: the only difference arises from Equation \eqref{differenza} when $k=h+1$. Indeed, in this case, $2w=n+\frac{q-2}{q-1}w_h$.  Table \ref{Table_q3} shows the weight distribution of $\mathrm{C}_{\widetilde{[D_4, D_4]}}$ for $q=3$ and $k=h+1$.
\end{remark}
\begin{center}

\begin{table}[h]
\caption{Weight Distribution of $\mathrm{C}_{\widetilde{[D_4, D_4]}}$ for $q=3$ and $k=h+1$}\label{Table_q3}
\begin{center}
\begin{tabular}{|c|c|}
\hline
Weight $i$ & $B_i$\\ \hline 
$0$ & $1$ \\ \hline 
$n$ & $2$ \\ \hline
$2w_s$, for $s=1,\ldots,k-1$ & $\binom{k-1}{s}2^s$ \\ \hline
$n+w_s/2$, for $s=1,\ldots,k-2$ & $\binom{k-1}{s}2^{s+1}$ \\ \hline
$2w$ & $3^k-3^{k-1}$ \\ \hline
$n+w/2$ & $2(3^k-3^{k-1})$ \\ \hline
\end{tabular}
\end{center}
\end{table}

\end{center}

As an application of Proposition \ref{prop:dist_codicitilde} we provide  the weight distribution of the code $\mathrm{C}_{|\widetilde{D_1,D_1}|}$. In this case we will not deal with possible collisions of two weights (since this problem is already hard to study for the weight distribution of $\mathrm{C}_{D_1}$).

\begin{proposition}
With the same notation as in Theorem \ref{weightDistr1}, the weight distribution of $\mathrm{C}_{\widetilde{[D_1, D_1]}}$ is given in Table \ref{Table_D1D1}.
\end{proposition}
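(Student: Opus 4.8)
The plan is to invoke Corollary \ref{Prop:dist_pesi} with $D=D_1$, so that the entire computation reduces to the already-established weight distribution of $\mathrm{C}_{D_1}$ recorded in Theorem \ref{weightDistr1}.

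First I would check that $D_1$ meets the hypothesis of Theorem \ref{codicitilde} and of Corollary \ref{Prop:dist_pesi}, namely that $a\cdot D_1=D_1$ for every $a\in\mathbb{F}_q^*$. This is immediate: the defining polynomial $\left(\sum_{i=1}^h x_i\right)\prod_{i=1}^h x_i$ is homogeneous of degree $h+1$, so scaling a point by $a\in\mathbb{F}_q^*$ multiplies its value by $a^{h+1}$ and hence preserves the vanishing locus. Thus $\mathrm{C}_{\widetilde{[D_1,D_1]}}$ is well defined and Corollary \ref{Prop:dist_pesi} applies verbatim.

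Next I would read off the weight data of $\mathrm{C}_{D_1}$ from Theorem \ref{weightDistr1}, \emph{together with the zero codeword} (for which $A_0=1$). By Corollary \ref{Prop:dist_pesi}, each weight $i$ of $\mathrm{C}_{D_1}$ occurring with multiplicity $A_i$ produces in $\mathrm{C}_{\widetilde{[D_1,D_1]}}$ exactly two families of codewords: $A_i$ codewords of weight $2i$ (the case $\alpha_{k+1}=0$) and $(q-1)A_i$ codewords of weight $n+\frac{q-2}{q-1}i$ (the case $\alpha_{k+1}\neq0$, where each admissible $\widetilde H$ is lifted in $q-1$ ways by the nonzero choices of $\alpha_{k+1}$). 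Applying this rule to the zero codeword yields the row $0\mapsto 1$ and the row $n\mapsto q-1$; applying it to the weight $\bar i_0 = n-q^{k-h-1}(q^h+\psi_h-(q-1)^h)+1$ of multiplicity $q^k-q^h$ yields the rows $2\bar i_0\mapsto q^k-q^h$ and $n+\frac{q-2}{q-1}\bar i_0\mapsto (q^k-q^h)(q-1)$; and applying it to each weight $\bar i_{s,r_1,\dots,r_l} = n-q^{k-1}-(q-1)^{h-s}q^{k-h}\psi_s+q^{k-h}A_{r_1,\dots,r_l,h-s}+1$ reproduces the multiplicity of Theorem \ref{weightDistr1}(ii) on the branch $2\bar i_{s,r_1,\dots,r_l}$ and multiplies it by $q-1$ on the branch $n+\frac{q-2}{q-1}\bar i_{s,r_1,\dots,r_l}$. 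Tabulating these contributions yields exactly Table \ref{Table_D1D1}.

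There is essentially no analytic obstacle here, since all the hard counting was already carried out in Theorem \ref{weightDistr1}; the one point requiring care is bookkeeping. As flagged in the text preceding the statement, I would \emph{not} attempt to merge rows that happen to share a common weight: distinct weights $i',i''$ of $\mathrm{C}_{D_1}$ may satisfy $2i'=n+\frac{q-2}{q-1}i''$, and Corollary \ref{Prop:dist_pesi} encodes such coincidences through the two-term expression for $B_i$. Since the collision problem is already intractable at the level of $\mathrm{C}_{D_1}$ (cf. Remark \ref{Remark1}), Table \ref{Table_D1D1} is to be read as the list of contributions $2i$ and $n+\frac{q-2}{q-1}i$ ranging over the weights $i$ of $\mathrm{C}_{D_1}$, with multiplicities $A_i$ and $(q-1)A_i$, rather than as a deduplicated spectrum. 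With this convention the proof is the direct substitution just described.
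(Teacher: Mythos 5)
Your proposal is correct and follows exactly the route the paper intends: the paper gives no explicit proof, but the proposition is stated as an application of Proposition \ref{prop:dist_codicitilde} (via Corollary \ref{Prop:dist_pesi}) to the weight data of Theorem \ref{weightDistr1}, which is precisely your substitution, including the correct handling of the zero codeword producing the rows $0\mapsto 1$ and $n\mapsto q-1$ and the deliberate decision not to resolve collisions.
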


\begin{table}[h]
\caption{Weight Distribution of $\mathrm{C}_{\widetilde{[D_1, D_1]}}$}\label{Table_D1D1}
\begin{center}
\begin{tabular}{|c|c|}
\hline
Weight $i$ & $B_i$\\ \hline 
$0$ & $1$ \\ \hline 
$n$ & $q-1$ \\ \hline
$2(n-q^{k-h-1}(q^h+\psi_{h}-(q-1)^h)+1)$ & $q^k-q^h$ \\ \hline
$n+(n-q^{k-h-1}(q^h+\psi_{h}-(q-1)^h)+1)\frac{q-2}{q-1}$ & $(q-1)(q^k-q^h)$ \\ \hline
$2(n- q^{k-1}-(q-1)^{h-s}q^{k-h}\psi_{s} + q^{k-h}A_{r_1,\ldots,r_l,h-s}+1)$ & $\binom{h}{s}\binom{s}{r_1;\ldots; r_l}\binom{l}{i_1;\ldots; i_j}\binom{q-1}{l}$ \\ \hline
$n+(n- q^{k-1}-(q-1)^{h-s}q^{k-h}\psi_{s} + q^{k-h}A_{r_1,\ldots,r_l,h-s}+1)\frac{q-2}{q-1}$ & $\left(q-1\right)\left(\binom{h}{s}\binom{s}{r_1;\ldots; r_l}\binom{l}{i_1;\ldots; i_j}\binom{q-1}{l}\right)$ \\ \hline
$n+(n- q^{k-1}-(q-1)^{h-s}q^{k-h}\psi_{s} + q^{k-h}A_{r_1,\ldots,r_l,h-s}+1)\frac{q-2}{q-1}$ & $\left(q-1\right)\left(\binom{h}{s}\binom{s}{r_1;\ldots; r_l}\binom{l}{i_1;\ldots; i_j}\binom{q-1}{l}\right)$ \\ \hline
\end{tabular}
\end{center}
\end{table}

Finally, we present the following open problems.
\begin{open}
Determine the weight distribution (without collisions) of $\mathrm{C}_{D_1}$ and $\mathrm{C}_{[\widetilde{D_1,D_1}]}$.
\end{open}
\begin{open}
Determine the weight distribution of $\mathrm{C}_{D_2}$ and $\mathrm{C}_{D_3}$.
\end{open}

\section{Acknowledgments*}
The research of D. Bartoli, M. Bonini, and M. Timpanella was partially supported  by the Italian National Group for Algebraic and Geometric Structures and their Applications (GNSAGA - INdAM).

\end{document}